\documentclass[pdflatex,sn-mathphys-num]{sn-jnl}% Math and Physical Sciences Numbered Reference Style
\usepackage{graphicx}%
\usepackage{multirow}%
\usepackage{amsmath,amssymb,amsfonts}%
\usepackage{amsthm}%
\usepackage{mathrsfs}%
\usepackage[title]{appendix}%
\usepackage{xcolor}%
\usepackage{textcomp}%
\usepackage{manyfoot}%
\usepackage{booktabs}%
\usepackage{algorithm}%
\usepackage{algorithmicx}%
\usepackage{algpseudocode}%
\usepackage{listings}%
\usepackage{bbm}

\theoremstyle{thmstyleone}%
\newtheorem{theorem}{Theorem}%  meant for continuous numbers
\newtheorem{proposition}[theorem]{Proposition}% 
\newtheorem{lemma}[theorem]{Lemma}% 
\newtheorem{corollary}[theorem]{Corollary}% 

\theoremstyle{thmstyletwo}%
\newtheorem{example}{Example}%
\newtheorem{remark}{Remark}%

\theoremstyle{thmstylethree}%
\newtheorem{definition}{Definition}%

\begin{document}

\title[Condensation in IPS with size-dependence]{Condensation in stochastic lattice gases with size-dependent stationary weights}

%%=============================================================%%
%% GivenName	-> \fnm{Joergen W.}
%% Particle	-> \spfx{van der} -> surname prefix
%% FamilyName	-> \sur{Ploeg}
%% Suffix	-> \sfx{IV}
%% \author*[1,2]{\fnm{Joergen W.} \spfx{van der} \sur{Ploeg} 
%%  \sfx{IV}}\email{iauthor@gmail.com}
%%=============================================================%%

\author[1]{\fnm{Joshua} \sur{Blank}}
\author[2]{\fnm{Paul} \sur{Chleboun}}
\author[1]{\fnm{Stefan} \sur{Grosskinsky}}
\author*[3]{\fnm{Watthanan} \sur{Jatuviriyapornchai}\email{watthanan.jat@mahidol.ac.th}}
% \equalcont{These authors contributed equally to this work.}

% \author[1,2]{\fnm{Third} \sur{Author}}\email{iiiauthor@gmail.com}
% \equalcont{These authors contributed equally to this work.}

\affil[1]{\orgdiv{Institute of Mathematics}, \orgname{University of Augsburg}, \orgaddress{
% \street{Street},
\city{Augsburg},
\postcode{86135}, 
% \state{State}, 
\country{Germany}}}

\affil[2]{\orgdiv{Department of Statistics}, \orgname{University of Warwick}, \orgaddress{\city{Coventry}, \postcode{CV4 7AL}, \country{UK}}}

\affil*[3]{\orgdiv{Department of Mathematics, Faculty of Science}, \orgname{Mahidol University}, \orgaddress{\city{Bangkok}, \postcode{10400}, \country{Thailand}}}

%%==================================%%
%% Sample for unstructured abstract %%
%%==================================%%

\abstract{
	We consider stochastic lattice gases with stationary product weights and a polynomial perturbation vanishing with the system size that leads to condensation. If the density of particles exceeds a critical value the system phase separates into a bulk with homogeneous distribution of particles and a condensed phase. Depending on parameter values, the latter consists of a single macroscopic cluster or a diverging number of independent clusters on a smaller scale. We establish the condensation transition via the equivalence of ensembles and the main novelty is a derivation of the cluster size distribution using size-biased sampling, generalizing previous work on zero-range and inclusion processes. 
Simulations of zero-range processes illustrate our theoretical results on the condensate scale and size distribution.}

%%================================%%
%% Sample for structured abstract %%
%%================================%%

\keywords{stochastic lattice gases, condensation, cluster size distribution, size-biased sampling}

%%\pacs[JEL Classification]{D8, H51}

%%\pacs[MSC Classification]{35A01, 65L10, 65L12, 65L20, 65L70}

\maketitle

\section{Introduction}
\label{sec_introduction}

Condensation phenomena in interacting particle systems have been a topic of recent research interest. We focus on stochastic lattice gases where the number of particles is conserved and which have stationary product measures. A general class of such systems with an arbitrary number of particles per site has been introduced in \cite{bib_C85} and further generalized in \cite{Saada2016}. Various models in that class have been studied in the context of condensation, where a non-zero fraction of the total mass concentrates in vanishing volume fraction \cite{bib_CG13, bib_EW14, bib_G19}. Examples include the zero-range process \cite{bib_DGC98, bib_E00, bib_G03} with rigorous results in \cite{bib_JMP00, bib_GSS03, bib_AL09, bib_AGL13}, or the inclusion process \cite{grosskinsky2011condensation,grosskinsky2013dynamics} and generalizations of the latter \cite{bib_WE12,chau2015explosive} with heuristic results on instantaneous condensate formation. Of particular recent interest have been rigorous results on the dynamics of condensate formation \cite{Jara2025coarsening,beltran2026} and the stationary dynamics of the condensate in the context of metastability (see e.g.\ \cite{dommers,kim2021condensation,kim2025hierarchical} for the inclusion process). Many important results in this direction have been obtained in pioneering work by Claudio Landim and co-authors, establishing a martingale \cite{Landim2015martingale} and a resolvent approach \cite{landim2023resolvent} to metastability (see also \cite{claudiosurvey} and references therein). It is a great honour to contribute to this volume celebrating his achievements. Happy birthday Claudio!\\

In this contribution, we focus on stochastic lattice gases with stationary product measures that have a size-dependent perturbation leading to a condensation transition, generalizing recent work on zero-range \cite{JG2024ZRPfastrate} and inclusion processes \cite{bib_JCG20,gan2025poisson}. Such systems exhibit a great variety of possible cluster scales and hierarchical structures, while most rigorous results on classical models of condensing lattice gases mentioned above focus on systems with a single condensate at stationarity. This paper is a first step towards a systematic understanding of condensation transitions introduced by perturbations of stationary product weights that vanish with increasing system size. We can cover cases with a generic polynomial behaviour in the leading order term of the perturbation and establish the condensation transition in the context of the equivalence of ensembles. Depending on the strength of the perturbation, the condensed phase consists of a single cluster or many independent clusters on a smaller scale. In addition to the scale, we also derive the cluster-size distributions, which turn out to be of Gamma-type due to the polynomial form of the perturbation. We identify the transition from a single macroscopic to mesoscopic clusters as a function of the two main parameter values. On the transition line, the system exhibits an interesting hierarchical structure with many macroscopic clusters, which has been identified as the Poisson-Dirichlet distribution for a particular class of models corresponding to the weights of inclusion processes \cite{bib_CGG22,gan2025poisson}. The analysis of the general behaviour of our model class on the transition line is beyond the scope of this paper and left for future research.\\

% we send $N,L \to \infty$ while the density of particles per lattice size is converging to a constant $N/L \to \rho$. This is known as the thermodynamic limit. When the limiting density $\rho$ exceeds a critical density, $\rho > \rho_c$, we observe clusters occupying a vanishing fraction of sites. A cluster is site with a diverging number of particles. The remaining sites are in the bulk phase, sites with density $\rho_c$. Otherwise, if the density is subcritical $\rho < \rho_c$, we only observe the bulk i.e. sites with finite occupation numbers. 

The paper is organized as follows: In the next Section \ref{sec2} we introduce the mathematical setting, give our main results on the equivalence of ensembles (Theorem \ref{equivalence}) and the cluster size distribution (Theorems \ref{cluster} and \ref{thm3}), as well as examples of lattice gases that are covered by our general approach. The proofs of the main results are given in Section \ref{sec_proof}. They mostly rely on an asymptotic analysis of the canonical partition function using local limit theorems, in order to derive the scaling limit of the size-biase distribution function for occupation numbers.

%The thesis is organized as follows: In Section \ref{sec_general_definitions}, we introduce a rigorous mathematical framework for lattice gases and show the condition under which such processes have stationary measures with product structure. Section \ref{sec_fast_rates} shows some ZRP results published initially in \cite{bib_JG24}. These results are relevant for this work since they provide a blueprint for the theorems we later show for general IPS. This section also introduces the size-biased reordering of $\eta$, which is important when studying cluster size distributions. The weights of the lattice gas we study and some heuristics for the later rigorously proven results are presented in Section \ref{sec_mathematical_framework_and_heuristic}. Finally, in Section \ref{sec_main_results}, we show the main results of the thesis. We prove the equivalence of ensembles in Theorem \ref{theorem_equivalence_of_ensembles} and the distribution of the scaled cluster sizes in Theorem \ref{theorem_scaled_cluster_distribution}.

\section{The model and main results\label{sec2}}

\subsection{Definitions}

We consider stochastic lattice gases on the state space $E_L =\mathbb{N}_0^\Lambda$, where the lattice $\Lambda$ is a finite index set of size $|\Lambda |=L$. Particle configurations in $E_L$ are denoted by $\eta =(\eta_x :x\in\Lambda)$, and the number of particles $N_\Lambda (\eta )=\sum\limits_{x\in\Lambda} \eta_x$ is the only conserved quantity of the system. We assume that the resulting canonical stationary measures $\pi_{L,N}$ on the restricted finite state spaces
\[
    E_{L,N} = \{\eta \in \mathbb{N}_0^L: \sum_{x \in \Lambda} \eta_x = N\}\,,\quad\text{with }N\geq 0\,,
\]
are spatially homogeneous and of product form. That is, for all $\eta\in E_{L,N}$ we have
\begin{equation}\label{canon}
\pi_{L,N} [\eta ]=\frac{1}{Z_{L,N}} \prod_{x\in\Lambda} w_L (\eta_x )\quad\mbox{where}\quad Z_{L,N} =\sum_{\eta\in E_{L,N}} \prod_{x\in\Lambda} w_L (\eta_x )
\end{equation}
is the normalizing partition function. We will assume that the stationary weights $w_L$ are strictly positive and of the form
\begin{equation}\label{weights}
    w_L (n)=w(n)+\frac{\theta}{L^\gamma} (n+1)^\kappa \big( 1+\delta_L (n)\big)\quad\mbox{with }\theta 
    >0,\gamma >0\mbox{ and }\kappa >-1\ ,
\end{equation}
consisting of a size-independent (bulk) part $w$ and a perturbation. The latter decays asymptotically like a power-law, where we assume $L\sup_{n\leq N} \delta_L (n)\to 0$ in the thermodynamic limit \eqref{thermo}. We further assume that $w$ has exponential moments such that
\begin{equation}\label{exmoments}
    z(\phi ):=\sum_{n\geq 0} w(n)\phi^n <\infty\quad\mbox{if and only if}\quad\phi \in [0,\bar\phi)\mbox{ with }1<\bar\phi \leq\infty\ ,
\end{equation}
and that w.l.o.g.
\begin{equation}
\sum_{n\geq 0} w(n)=1\quad\mbox{with first moment}\quad \rho_c :=\sum_{n\geq 0} n\, w(n)<\infty\ .
\label{wmoments}    
\end{equation}
This structure is quite generic and includes a large class of lattice gases that arise in various applications, as is explained in Section \ref{sec_ips}. Due to the product structure of the stationary weights, there are also grand-canonical stationary measures $\nu^L_\phi [\eta ]= \bigotimes_{x \in \Lambda} \nu^1_{\phi} [\eta_x ]$ for all $\eta\in E_L$, with marginals
\begin{equation}
    \nu^1_{\phi} [\eta_x ]=\frac{1}{z_L (\phi )} w_L (\eta_x )\phi^{\eta_x}\quad\mbox{where}\quad z_L (\phi )=\sum_{n\geq 0} w_L (n)\phi^n
    \label{eq_grand_canonical_ensemble}
\end{equation}
is the normalizing partition function. Since we assume exponential moments \eqref{wmoments} and $\kappa >-1$ these measures are well defined for all $\phi\in [0,1)$. Note that for simplicity of notation we identify the measures $\pi_{L,N}$ and $\nu_\phi^L$ with their mass functions on the discrete state spaces $E_L$ and $E_{L,N}$.

The expected particle density under the grand-canonical measures is denoted by
\begin{equation}\label{rl}
    R_L (\phi ):=\langle\nu_\phi^1 ,\eta_x \rangle :=\frac{1}{z_L (\phi )}\sum_{n\geq 0} n\, w_L (n)\phi^n \ ,
\end{equation}
where we use the notation $\langle\mu ,f\rangle$ for the expectation of a function $f$ w.r.t.\ the distribution $\mu$. Since the power-law perturbation tilted by $\phi^n$ is summable and vanishes as $L\to\infty$, we have for all $\phi <1$
\begin{equation}\label{zrlim}
z_L (\phi )\to z(\phi ):=\sum_{n\geq 0} w(n)\phi^n \quad\mbox{and}\quad R_L (\phi )\to R(\phi ):=\frac{1}{z(\phi )}\sum_{n\geq 0} n\, w(n)\phi^n \ .    
\end{equation}
Note that $R(\phi )\nearrow\rho_c$ as $\phi\to 1$ approaches the critical density as defined in \eqref{wmoments}, whereas $R_L (\phi )\nearrow\infty$ as $\phi\to 1$ for all $L$. So the limits $\phi\nearrow 1$ and $L\to\infty$ do not commute, which leads to a condensation transition with critical density $\rho_c$ as explained in the next subsection. Both, $R(\phi )$ and $R_L (\phi )$, are also monotone increasing and continuous with $R(0)=R_L (0)=0$. 
Note that \eqref{exmoments} implies that the bulk weights $w$ can be tilted to any density, i.e.
\begin{equation}\label{tilt}
\mbox{for all }\rho\geq 0\mbox{ there exists }\phi <\bar\phi \mbox{ such that }R(\phi )=\rho\ .    
\end{equation}
% and we further assume that
% \[
% R(\phi )\nearrow\infty\quad\mbox{as }\phi\nearrow\bar\phi >1\ ,\quad\mbox{the radius of convergence of }z(\phi )\ .
% \]

\bigskip
\subsection{Main results}

We examine the asymptotic behaviour of the canonical measures $\pi_{L,N}$ in the
\begin{equation}\label{thermo}
    \mbox{thermodynamic limit}\quad L,N=N_L \to\infty\quad\mbox{with}\quad N/L\to\rho\geq 0\ .
\end{equation}
In our first result, we establish the condensation transition with critical density $\rho_c$ \eqref{wmoments}.

\begin{theorem}[\textbf{Equivalence of ensembles}]\label{equivalence}
Consider a lattice gas with stationary product weights $w_L$ as defined in \eqref{weights} and \eqref{wmoments}. Then for any finite set of sites $\Delta\subset\Lambda$ (for $L$ large enough), denoting $\pi_{L,N}^\Delta$ the marginal of $\pi_{L,N}$ on $\Delta$, we have in the thermodynamic limit \eqref{thermo} with $N/L\to\rho$
\[
\pi_{L,N}^\Delta \overset{d}{\longrightarrow} \nu_{\Phi (\rho )}^\Delta \quad\mbox{where}\quad \Phi (\rho )=\begin{cases}
	\phi &\text{for } R(\phi) = \rho \leq \rho_c\,, \\
	1 &\text{for } \rho > \rho_c\,,
\end{cases}
\]
where $\rho_c$ and $R(\phi)$ are given in \eqref{wmoments} and \eqref{zrlim}, respectively.
\end{theorem}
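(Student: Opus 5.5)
We reduce the statement to ratios of partition functions. For a finite $\Delta$ with $|\Delta|=k$ and a fixed local configuration $\zeta\in\mathbb{N}_0^\Delta$ with $|\zeta|=\sum_{x\in\Delta}\zeta_x$, the product structure \eqref{canon} gives
\[
\pi_{L,N}^\Delta[\zeta]=\prod_{x\in\Delta}w_L(\zeta_x)\,\frac{Z_{L-k,N-|\zeta|}}{Z_{L,N}} .
\]
Since $w_L(n)\to w(n)$ for fixed $n$, and the $\zeta$ range over a countable set (so pointwise convergence of mass functions to a probability mass function gives convergence in distribution by Scheff\'e), it suffices to prove
\[
\frac{Z_{L-k,N-m}}{Z_{L,N}}\to \frac{\Phi^{m}}{z(\Phi)^{k}}\qquad\text{for fixed }k,m .
\]
We write $Z_{L,N}=z_L(\phi)^L\phi^{-N}\,\nu_\phi^L[N_\Lambda=N]$ for any admissible $\phi$ and choose $\phi$ depending on the regime.

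\emph{Subcritical case $\rho\le\rho_c$ (with $\rho<\rho_c$ first).} Take $\phi_L$ with $R_L(\phi_L)=N/L$. Then $\phi_L\to\phi=\Phi(\rho)<1$, and the perturbation contributes $O(L^{-\gamma})$ to all moments uniformly near $\phi$. Under $\nu^1_{\phi_L}$ the single-site law has exponential moments and converges to $\nu^1_\phi$. A local limit theorem for triangular arrays of i.i.d.\ lattice variables (the bulk weights are aperiodic, since we may assume $w(0),w(1)>0$ or pass to the lattice span) gives $\nu_{\phi_L}^L[N_\Lambda=N]\sim(2\pi L\sigma^2)^{-1/2}$. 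The same holds with $L-k$ and $N-m$ at the same $\phi_L$, since the mean shift is $O(1)$ on a $\sqrt L$ scale. The ratio of partition functions is then $\phi_L^{m}z_L(\phi_L)^{-k}(1+o(1))\to\phi^m z(\phi)^{-k}$. The boundary case $\rho=\rho_c$ follows by monotonicity of the marginals in $N$, or by the supercritical argument below.

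\emph{Supercritical case $\rho>\rho_c$.} This is the main obstacle, because no tilt $\phi<1$ matches the density in the limit. Here we take $\phi=1$ formally, i.e.\ we work directly with $Z_{L,N}=\sum_\eta\prod w_L(\eta_x)$ and compare it with $Z_{L-k,N-m}$. We decompose the sum over configurations according to the set of sites whose occupation exceeds a cutoff $M_L$ (e.g.\ $M_L=L^{1/2+\epsilon}$). Sites below the cutoff have weight $w(n)(1+o(1))$ uniformly; the term $\theta L^{-\gamma}(n+1)^\kappa$ is negligible there, or can be absorbed. The excess mass $N-\rho_c L$ must sit in large sites, each carrying weight $\approx\theta L^{-\gamma}n^\kappa$.

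The key claim is that the removal of $k$ sites and $m$ particles changes $Z$ only by the factor $z(1)^{-k}\cdot1=1$ to leading order. To prove it, we write
\[
Z_{L,N}=\sum_{j}\binom{L}{j}\sum_{\text{big }n_1..n_j}\prod\theta L^{-\gamma}n_i^\kappa\;\cdot\;Z^{\rm bulk}_{L-j,N-\sum n_i}
\]
and apply the local limit theorem to the bulk at $\phi=1$, where $w$ has exponential moments and hence finite variance. The resulting bulk factor is a smooth function of the excess density, $(2\pi L\sigma^2)^{-1/2}\exp(-(\text{excess})^2/2L\sigma^2)$. Shifting $N\to N-m$ and $L\to L-k$ perturbs the argument by $O(1)$, and shifts the big-cluster masses by $O(1)$ relative to scales $\gg1$. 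Every term in the sum is therefore multiplied by $1+o(1)$ uniformly, provided we show that the dominant contributions come from clusters of diverging size; this follows from tail estimates on the polynomial weights, which dominate $w$ since $w$ decays exponentially. The $\binom{L}{j}$ versus $\binom{L-k}{j}$ discrepancy is $1+O(jk/L)$; we need $j=o(L)$ in dominant terms, which holds because each big site costs a factor $L^{-\gamma}$ and the relevant $j$ is at most polynomial of lower order.

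Controlling these uniform error terms across all regimes of $(\gamma,\kappa)$ — single condensate versus many mesoscopic clusters — is the delicate step. Our first attempt would be a comparison inequality
\[
Z_{L-k,N-m}\ge \Bigl(\min_{n\ge M} \frac{w_L(n-m)}{w_L(n)}\Bigr) Z_{L-k,N}\cdots
\]
obtained by moving $m$ particles onto the largest site. The ratio $(1-m/n)^\kappa\to1$ gives a lower bound. The matching upper bound would come from the reverse move together with $Z_{L,N}\ge z(1)\cdot Z_{L-1,N}(1-o(1))$, itself proved via the bulk local limit theorem.
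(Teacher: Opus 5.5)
Your reduction to ratios $Z_{L-k,N-m}/Z_{L,N}$ is sound, and so is the subcritical argument (tilting to $R_L(\phi_L)=N/L$ plus a triangular-array local limit theorem). There is, however, a genuine gap in the supercritical case, and also at $\rho=\rho_c$.

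\textbf{The supercritical ratio is asserted, not proved.} There, $Z_{L-k,N-m}/Z_{L,N}\to 1$ is the entire content, and the sketch does not establish it.
\begin{itemize}
\item With cutoff $M_L=L^{1/2+\epsilon}$ it is false that $w_L(n)=w(n)(1+o(1))$ uniformly below $M_L$. Since $w$ decays exponentially, $\theta L^{-\gamma}(n+1)^\kappa$ already dominates for $n\gtrsim c\log L$.
\item For $\gamma<(\kappa+2)/2$ the condensate consists of roughly $L^{1-\gamma/(\kappa+2)}$ clusters of size $L^{\gamma/(\kappa+2)}\ll L^{1/2}$. So the excess mass does not sit above your cutoff, and the perturbation cannot be absorbed into the bulk.
\item Your claims that the relevant number of large sites is $o(L)$ and that the shifts $L\to L-k$, $N\to N-m$ act as $1+o(1)$ uniformly would require the full saddle-point analysis of $Z_{L,N}$. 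The paper does this only off the line $\gamma=\kappa+2$, yet the theorem is stated for all $\gamma>0,\kappa>-1$.
\item The particle-moving comparison presupposes that configurations with a large maximum dominate $Z_{L-k,N}$ and that the map is essentially injective.
\item Its ``matching upper bound'' $Z_{L,N}\le(1+o(1))Z_{L-k,N}$ needs tightness of $\eta_1$ under $\pi_{L,N}$, which is essentially the statement you are trying to prove.
\end{itemize}

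\textbf{The boundary case is not covered by monotonicity.} You cannot get $\rho=\rho_c$ from monotonicity in $N$. The standard criterion for stochastic monotonicity of conditioned product measures is log-concavity of the weights. $w_L$ is not log-concave, because $w_L(n-1)/w_L(n)$ decreases where the polynomial tail takes over from $w$.

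\textbf{The missing idea.} You do not need $1+o(1)$ precision on partition functions, only $e^{o(L)}$ precision on a single probability. The paper's route is as follows.
\begin{itemize}
\item Since $\pi_{L,N}=\nu^L_{\phi}(\cdot\,|\,N_\Lambda=N)$, one has $H(\pi_{L,N}\,|\,\nu^L_{\phi_L})=-\log\nu^L_{\phi_L}[N_\Lambda=N]$.
\item Exchangeability and superadditivity of relative entropy with respect to a product reference measure give $H(\pi^\Delta_{L,N}\,|\,\nu^\Delta_{\phi_L})\le H(\pi_{L,N}\,|\,\nu^L_{\phi_L})/\lfloor L/|\Delta|\rfloor$.
\item For $\rho\ge\rho_c$ the paper takes $\phi_L=1-L^{-\gamma/(\kappa+4)}$. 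Then $\sum_n n^2(w_L(n)-w(n))\phi_L^n\to 0$, so $\nu^1_{\phi_L}\to w$ with converging first two moments, and the local limit theorem applies to the bulk.
\item Place the excess $\approx(\rho-\rho_c)L$ on one site. Using $\frac1L\log w_L(aL)\to 0$ and $\phi_L^{N}=e^{-O(L^{1-\gamma/(\kappa+4)})}=e^{-o(L)}$, this gives $\nu^L_{\phi_L}[N_\Lambda=N]\ge e^{-o(L)}$.
\item Pinsker's inequality then yields total-variation convergence of $\pi^\Delta_{L,N}$ to $\nu^\Delta_{\Phi(\rho)}$.
\end{itemize}
This argument is uniform in $(\gamma,\kappa)$, including the transition line, and needs no knowledge of the condensate's structure.
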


%\noindent This result implies that for densities $N/L\to\rho >\rho_c$ any finite dimensional marginal of the limiting distribution is given by the product measure $\nu_1^\Delta$ with density $\rho_c$. So a non-zero fraction $\rho -\rho_c$ of mass has to concentrate in a vanishing volume fraction.\\
\begin{proof} 
This follows from the same proof strategy as presented in \cite[Proposition A.1]{bib_CGG22}. It is a corollary of the local limit theorem for triangular arrays \cite{bib_DM95} and a large deviation estimate. It is sufficient to confirm that for each $\rho$ there exists a sequence $\phi_L \to \Phi(\rho)$ such that
\begin{align}
    \label{eq:weightsconv}
    \Big| \sum_{n\geq 0} n^2 \left( w_L (n) - w(n) \right) \phi_L^n \Big| \to 0 \quad \textrm{as } L \to \infty\,,
\end{align}
where the limiting weights $w(n)$ satisfy
\begin{itemize}\setlength{\itemindent}{2em}
	\item[(i)] $\sum_{n \geq 0} w(n) =1$\ ,\qquad\ \ (ii) $w(0) > 0$ and $\sup_{n} \left\{w(n-1) \wedge w(n)\right\} > 0$\ ,
	\item[(iii)] $\sum_{n \geq 0} n^2 w(n) < \infty$\ ,\quad (iv) $\frac{1}{L} \log w_L (a L) \to 0 \text{ for all } a>0$\ .
\end{itemize}
(i) - (iii) hold directly by the assumptions on $w(n)$. Since the limit $w$ decays exponentially, $w_L (aL )\simeq \theta (aL)^\kappa /L^\gamma $ is dominated by the perturbation as $L\to\infty$, so that (iv) holds. 

In particular, \eqref{eq:weightsconv} implies that the first two moments of the size dependent marginal converge as 
\begin{align}
    \label{eq:momsconv}
    \frac{1}{z_L (\phi )}\sum_{n\geq 0} n^p\, w_L (n)\phi_L^n \to \sum_{n\geq 0} n^p\, w (n)\Phi(\rho)^n  \ ,\quad \textrm{for } p \in \{0,1,2\}\,.
\end{align}
Furthermore, \eqref{eq:weightsconv} implies a Lindeberg condition for the centered and normalized sum $\sum_{x=1}^L\eta_{x,L}$, where $\eta_{x,L}\overset{\text{i.i.d.}}{\sim}  \nu_{\phi_L}^1$ (see the proof of \cite[Lemma A.4]{bib_CGG22}).
This in turn, with the lattice condition (ii) above, implies the required local limit theorem.

The only significant change from the proof of \cite[Proposition A.1]{bib_CGG22} is that \eqref{eq:weightsconv} can be confirmed as follows.
We choose $\phi_L = \Phi(\rho)$ for $\rho < \rho_c$ and $\phi_L= 1-L^{\frac{-\gamma}{\kappa + 4}}$ for $\rho \geq \rho_c$, so that
\begin{align*}
    \Big| \sum_{n\geq 0} n^2 \left( w_L (n) - w(n) \right) \phi_L^n \Big| &\leq \frac{\theta}{L^\gamma}\Big| \sum_{n\geq 0} (n+1)^{\kappa +2} \phi_L^n\Big| \\ 
    &\leq \frac{\theta\Gamma(\kappa+1)}{L^{\gamma}(1-\phi_L)^{\kappa +3}} \to 0 \quad \textrm{as } L \to \infty\,.
\end{align*}
The proof concludes following exactly  \cite{bib_CGG22} by a standard relative entropy argument, a large deviation upper bound for $\rho>\rho_c$ using (iv), and finally applying Pinsker's inequality.
% \textcolor{red}{PC: More/less detail as we chose OR we add an appendix}.
\end{proof}

Theorem \ref{equivalence} is equivalent to weak convergence for bounded, continuous cylinder functions $f\in C_b (E_\Delta )$\vspace*{-5mm}
\begin{equation*}
	\langle\pi_{L,N} ,f\rangle \to
	\begin{cases}
		\langle \nu_{\phi}^\Delta ,f\rangle &\text{for } R(\phi) = \rho \leq \rho_c \ , \\
		\langle \nu_1^\Delta ,f\rangle =\langle w^\Delta ,f\rangle &\text{for } \rho > \rho_c \ .
	\end{cases}
\end{equation*}
It implies that for $N/L\to\rho >\rho_c$ the system asymptotically phase separates into a homogeneous bulk with density $\rho_c$ and a condensate, where a macroscopic fraction  $(\rho -\rho_c )$ of mass concentrates on a vanishing volume fraction, forming clusters of diverging size. The equivalence of ensembles only describes the distribution of the bulk sites and in order to study the the condensate we use size-biased configurations $\tilde\eta :=(\eta_{\sigma (x)}:x\in\Lambda)\in E_{L,N}$ under $\pi_{L,N}$. The lattice indices are re-ordered according to the random permutation $\sigma :\Lambda\to\Lambda$, defined recursively as
\begin{align}
\sigma (1)&=x\mbox{ with prob. }\frac{\eta_x}{N}\ ,\quad x\in\Lambda\nonumber\\
\sigma (k+1)&=x\mbox{ with prob. }\frac{\eta_x}{N-(\eta_{\sigma (1)}+\ldots +\eta_{\sigma (k)})}\ ,\quad x\in\Lambda\setminus\big\{\sigma (1),\ldots ,\sigma (k)\big\}\ .\label{sizebias}
\end{align}
Equivalently, successive entries of $\tilde\eta$ are obtained by choosing a particle uniformly at random from the remaining sites and recording the occupation number at its location. This way cluster sites from the condensate appear with probability $(\rho -\rho_c )/\rho$ according to their mass fraction. 
Note that due to the product form of the distributions, we do not lose any spatial information under the size-biased resampling. We can now formulate our main result.\vspace*{-3mm}

\begin{theorem}[\textbf{Distribution of the condensate I}]\label{cluster}
Consider a lattice gas with stationary product weights $w_L$ as defined in \eqref{weights} and \eqref{wmoments} with $\kappa +2>\gamma \wedge 1$. 
% $\kappa >\min (-1,\gamma -2)$. 
% and $\gamma <k+2$. 
In the thermodynamic limit \eqref{thermo} with density $\rho > \rho_c$ \eqref{wmoments} we have, for a size-biased configuration \eqref{sizebias},
\begin{equation*}
\tilde{\eta}_1/C_L \overset{d}{\longrightarrow} 
\begin{cases}
    0 & \text{with probability } \rho_c/\rho \\
    Z & \text{with probability } (\rho-\rho_c)/\rho
\end{cases}\ ,
\end{equation*}
where $C_L = \left(\frac{(\rho-\rho_c) L^\gamma}{\theta \Gamma(\kappa+2)}\right)^{1 /(\kappa +2)} \ll L$ is the typical cluster scale and $Z \sim \Gamma(\kappa +2, 1)$ the random size on that scale. 
Additionally, for any fixed $m \in \mathbb{N},\ \tilde{\eta}_1, \dots, \tilde{\eta}_m$ converge to i.i.d. random variables with marginal law as above.
\end{theorem}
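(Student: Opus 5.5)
The plan is to use the explicit law of the size-biased first entry together with asymptotics of the canonical partition function. By the product form \eqref{canon} and exchangeability,
\[
\pi_{L,N}[\tilde\eta_1=n]=\frac{L\,n}{N}\,\pi_{L,N}[\eta_1=n]=\frac{L\,n}{N}\,\frac{w_L(n)\,Z_{L-1,N-n}}{Z_{L,N}}\ .
\]
For several entries the same reasoning gives
\[
\pi_{L,N}[\tilde\eta_1=n_1,\ldots,\tilde\eta_m=n_m]=\frac{L(L-1)\cdots(L-m+1)\prod_i n_i w_L(n_i)}{\prod_{k}(N-n_1-\ldots-n_{k-1})}\,\frac{Z_{L-m,N-\sum n_i}}{Z_{L,N}}\ .
\]
Everything therefore reduces to the asymptotics of the ratio $Z_{L-m,N-n}/Z_{L,N}$ uniformly for $n$ in the relevant ranges.

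\textbf{Step 1: the bulk part.} For fixed $n$, Theorem \ref{equivalence} gives $\pi_{L,N}[\eta_1=n]\to w(n)$, since $\Phi(\rho)=1$ and $z(1)=1$. Hence $\pi_{L,N}[\tilde\eta_1=n]\to n\,w(n)/\rho$. Summing over $n\le K$ and letting $K\to\infty$ gives mass $\rho_c/\rho$ near $0$ on every scale, in particular on the scale $C_L$. The complementary mass $(\rho-\rho_c)/\rho$ lies on diverging $n$, and there $w_L(n)\simeq\theta n^\kappa L^{-\gamma}$ because $w$ decays exponentially.

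\textbf{Step 2: the partition function on the condensed scale.} The heuristic is that the condensate consists of many independent perturbation sites. I would tilt with $\phi_L=1-1/C_L$ (up to constants). The grand-canonical excess mass per site carried by the perturbation is then
\[
\frac{\theta}{L^\gamma}\sum_n n(n+1)^\kappa\phi_L^n\approx\frac{\theta\,\Gamma(\kappa+2)\,C_L^{\kappa+2}}{L^\gamma}\ .
\]
Equating this with $\rho-\rho_c$ fixes exactly the stated $C_L$. Under $\nu^L_{\phi_L}$ the sum $\sum_x\eta_x$ then has mean $\approx N$. The number of condensed sites is of order $L^{1-\gamma}C_L^{\kappa+1}=L\,C_L^{-1}(\rho-\rho_c)/(\theta\Gamma(\kappa+2))\cdot\theta\Gamma(\kappa+1)$, which diverges since $C_L\ll L$. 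Here $C_L\ll L$ is equivalent to $\gamma<\kappa+2$, and the condition $\kappa+2>\gamma\wedge1$ should cover the case where this count diverges or the fluctuations are controlled. The total variance is of order $L^{1-\gamma}C_L^{\kappa+3}\asymp L\,C_L$. I would then prove a local limit theorem for the triangular array $\eta_{x,L}\sim\nu^1_{\phi_L}$, in the style of \cite{bib_DM95} as used in the proof of Theorem \ref{equivalence}: the bulk is Gaussian of size $\sqrt L$, and the condensate part is Gaussian of size $\sqrt{LC_L}\ll L$. This yields
\[
\frac{Z_{L-m,N-n}}{Z_{L,N}}=\frac{z_L(\phi_L)^{-m}\,\nu^{L-m}_{\phi_L}[\textstyle\sum=N-n]}{\phi_L^{n}\,\nu^{L}_{\phi_L}[\textstyle\sum=N]}\to\phi_L^{-n}\,(1+o(1))
\]
uniformly for $n=O(C_L)$. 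The point is that a shift by $n\ll\sqrt{LC_L}$ does not change the local probability to leading order, and $z_L(\phi_L)\to1$.

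\textbf{Step 3: conclusion.} For $n=uC_L$ the pieces combine to
\[
\frac{L\,n}{N}\,\frac{\theta n^\kappa}{L^\gamma}\,\phi_L^{n}\approx\frac{\theta\,C_L^{\kappa+2}}{\rho L^\gamma}\,u^{\kappa+1}e^{-u}\,\frac{1}{C_L}=\frac{\rho-\rho_c}{\rho}\,\frac{u^{\kappa+1}e^{-u}}{\Gamma(\kappa+2)}\,\frac{1}{C_L}\ ,
\]
which is the Gamma$(\kappa+2,1)$ density with the correct weight. Riemann-sum convergence then gives the claim. To rule out mass escaping to intermediate scales ($1\ll n\ll C_L$ or $n\gg C_L$) I would use tightness from the first moment: the total masses $\rho_c/\rho$ and $(\rho-\rho_c)/\rho$ are already accounted for, and this requires a uniform upper bound on the partition-function ratio for $n$ up to $N$, for instance via a crude large-deviation bound on $\nu^{L}_{\phi_L}$. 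For $m$ entries the product formula factorizes in the same way, because $N-\sum n_i=N(1-o(1))$ on the scale $C_L\ll L$, which gives asymptotic independence.

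\textbf{Main obstacle.} I expect the hardest step to be the local limit theorem with uniform error in Step 2. The summands are heavy-tailed, with rare sites of size about $C_L$, so the Lindeberg condition has to be checked carefully. One also needs a uniform upper bound on $\nu^{L-1}_{\phi_L}[\sum=N-n]$ for large $n$, so that mass beyond scale $C_L$ can be excluded. I would first attempt this by splitting each $\eta_x$ into its bulk and perturbation components, writing $w_L$ as a mixture, and handling the perturbation part as a compound Poisson sum.
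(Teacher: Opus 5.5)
Your proposal is correct and takes a genuinely different route from the paper. The paper never tilts at the condensate scale. It expands $Z_{L,N}$ in the number $\ell$ of sites using the perturbation and evaluates the convolutions $A_\ell(n)$ as Dirichlet integrals (Lemma \ref{lemma_sumprod}), combined with the bulk large-deviation local limit theorem (Lemma \ref{lemma_binomexp}). It then applies Laplace's method first in $n$ (saddle at $n\approx(\rho-\rho_c)L$) and then in $\ell$. The saddle in $\ell$ is $s^*(\rho)L^{\alpha}$ with $\alpha=1-\gamma/(\kappa+2)$, which is exactly your count $L(\rho-\rho_c)/((\kappa+1)C_L)$ of condensed sites. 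After discarding $\ell\ge\tilde L$ by a crude bound, it reads off $Z_{L-1,N-n}/Z_{L,N}\simeq e^{-n/C_L}$ from the leading exponent. The theorem then follows from the $S_1,S_2,S_3$ split of $\pi_{L,N}[\tilde\eta_1\le uC_L]$, which is essentially your Steps 1 and 3.

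Your tilt cancels the exponential factor identically, so the ratio becomes $z_L(\phi_L)^{-m}\phi_L^{n}$ times a ratio of two local probabilities. This makes the factor $e^{-n/C_L}$, the mass-balance origin of $C_L$, and the factorization over $m$ entries transparent. It also avoids needing the $o(1)$ terms in the exponent $L^\alpha(\cdots+o(1))$ of $Z_{L,N}$ to cancel between $Z_{L-1,N-n}$ and $Z_{L,N}$, which the paper's ratio step tacitly uses. The price is a non-standard local limit theorem. In exchange, the paper's expansion yields the asymptotics of $Z_{L,N}$ itself and carries over to $\gamma>\kappa+2$ (Lemma \ref{zasymp3}). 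There $L/C_L$ no longer diverges, and a Gaussian local limit theorem at your tilt is unavailable.

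A few points to fix when writing it out.
\begin{itemize}
\item \textbf{Sign of $\phi_L^n$.} The ratio is $z_L(\phi_L)^{-m}\phi_L^{n}\,\nu^{L-m}_{\phi_L}[\sum_x\eta_x=N-n]/\nu^{L}_{\phi_L}[\sum_x\eta_x=N]$, so $\phi_L^{n}$ belongs in the numerator. Your Step 3 already uses the right sign.
\item \textbf{Exact choice of $\phi_L$.} Define $\phi_L$ by $R_L(\phi_L)=N/L$, which is possible since $R_L$ is continuous and increases to $\infty$ as $\phi\nearrow1$. With only $\phi_L=1-1/C_L$, the centring error $|LR_L(\phi_L)-N|$ need not be $o(\sqrt{LC_L})$. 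The exact choice gives $(1-\phi_L)C_L\to1$, hence $\phi_L^{uC_L}\to e^{-u}$ locally uniformly in $u$.
\item \textbf{The local limit theorem.} It cannot simply be quoted from \cite{bib_DM95} as in the proof of Theorem \ref{equivalence}. The bulk only smooths on scale $\sqrt L$, while the variance is $\asymp LC_L$, so the smoothing must come from the perturbed component. A characteristic-function argument does this. Write $\Psi=(1-p_L)\psi_b+p_L\psi_c$, with $p_L\asymp 1/C_L$ and $\psi_c$ the characteristic function of $\nu_{\kappa,\phi_L}$. For $|t|\ge\delta/C_L$ one has $|\psi_c(t)|\le 1-c$, using the $\Gamma(\kappa+1,1)$ limit of $\eta/C_L$ together with the polylogarithm bound for large $|t|$. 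Hence $|\Psi(t)|^L\le e^{-cL/C_L}=o((LC_L)^{-1/2})$ on that range, because $L/C_L$ grows polynomially when $\gamma<\kappa+2$. For $|t|\le\delta/C_L$ the usual second-order expansion works, using $\langle\nu^1_{\phi_L},\eta_x^3\rangle=O(C_L^2)$.
\item \textbf{No large-$n$ bound needed.} You do not need any bound on the ratio for $n\gg C_L$. The probability $\pi_{L,N}[\tilde\eta_1\le uC_L]$ only involves $n\le uC_L$, where $\phi_L^n\le 1$ gives domination, and the limit is a proper distribution function.
\end{itemize}
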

\vspace*{-3mm}
There is a simple heuristic for the cluster scale $C_L$. Assuming that the size-biased stationary weights converge to a non-degenerate density on scale $C_L \gg 1$, this implies with \eqref{weights} and finite exponential moments of $w(n)$ that
\[
C_L^2 w_L (C_L )\simeq \underbrace{C_L^2 w(C_L )}_{\to 0} +\theta C_L^{\kappa +2} /L^\gamma =O(1)
\]
so that $C_L \propto L^{\gamma /(\kappa +2)}$. 
For the canonical measure \eqref{canon} of a size-biased configuration $\tilde\eta$ \eqref{sizebias} the product structure leads to an explicit formula
\begin{align}
\pi_{L,N} [\tilde\eta_1 =n_1 ,\ldots ,\tilde\eta_m =n_m ]=\frac{L}{N} n_1 w_L (n_1 )\cdots &\frac{L-m+1}{N-n_1 -..-n_{m-1}} n_m w_L (n_m )\nonumber\\
&\times\frac{Z_{L-m,N-n_1 -\ldots-n_m}}{Z_{L,N}}
\label{sbformula}
\end{align}
for all $n_1 ,\ldots ,n_m \geq 0$ with $n_1 +\ldots +n_m \leq N$. 
The proof of Theorem \ref{cluster} then involves a careful asymptotic analysis of the ratio of partition functions, and will be presented in Section \ref{sec_proof}. 
The condition $\gamma <\kappa +2$ implies $C_L \ll L$, so each cluster only takes up a vanishing volume fraction and their sizes are asymptotically independent. This changes for $\gamma >\kappa +2$ where the condensed phase is dominated by a single cluster.

\begin{theorem}[\textbf{Distribution of the condensate II}]\label{thm3}
	Consider a lattice gas with stationary product weights $w_L$ as defined in \eqref{weights} and \eqref{wmoments} with $\gamma >\kappa +2 >1$. 
	In the thermodynamic limit \eqref{thermo} with density $\rho > \rho_c$ \eqref{wmoments} we have 
	\[
	\frac1L\max_{x\in\Lambda} \eta_x \overset{d}{\longrightarrow} \rho -\rho_c \ ,
	\]
	i.e. all the condensed mass concentrates in a single macroscopic cluster. Equivalently,
    \begin{equation*}
\tilde{\eta}_1/L \overset{d}{\longrightarrow} 
\begin{cases}
    0 & \text{with probability } \rho_c/\rho\,, \\
    \rho -\rho_c & \text{with probability } (\rho-\rho_c)/\rho\,.
\end{cases}
\end{equation*}
\end{theorem}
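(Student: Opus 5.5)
We plan to work with the size-biased formula \eqref{sbformula} for $m=1$, i.e.
\[
\pi_{L,N}[\tilde\eta_1=n]=\frac{L}{N}\,n\,w_L(n)\,\frac{Z_{L-1,N-n}}{Z_{L,N}}\ ,
\]
and to show that its mass concentrates near $n=O(1)$ (total mass $\rho_c/\rho$) and near $n\approx(\rho-\rho_c)L$ (total mass $(\rho-\rho_c)/\rho$). The claim on $\max_x\eta_x$ then follows from the size-biased statement. Indeed, if the second atom carries mass $(\rho-\rho_c)/\rho$, the expected fraction of particles on sites with $\eta_x\geq\epsilon L$ converges to $(\rho-\rho_c)/\rho$. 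Since at most one site can carry more than $(\rho-\rho_c)L/2$ particles and the atom sits at $\rho-\rho_c$, this forces a single site with mass $(\rho-\rho_c+o(1))L$. The converse implication is immediate.

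\textbf{Step 1: asymptotics of $Z_{L,N}$.} Use the decomposition by the maximum. Write $Z_{L,N}=\sum_{\eta}\prod_x w_L(\eta_x)$, expand $w_L=w+p_L$ with $p_L(n)=\theta L^{-\gamma}(n+1)^\kappa(1+\delta_L(n))$, and order terms by how many sites use the perturbation. The zero-perturbation term is $\nu_1^L[\sum\eta_x=N]$, since $\sum w=1$. Because $w$ has exponential moments and $N-\rho_cL\sim(\rho-\rho_c)L$, this term is exponentially small by Cramér's bound. The one-perturbation term is
\[
L\sum_n p_L(n)\,\nu_1^{L-1}[\textstyle\sum\eta=N-n]\simeq L\,p_L\big((\rho-\rho_c)L\big)\simeq \theta(\rho-\rho_c)^\kappa L^{1+\kappa-\gamma}\ .
\]
This follows from the local CLT for $\nu_1^{L-1}$, since the sum over $n$ in a window of size $\sqrt L$ around $N-\rho_cL$ gives total mass $\approx1$ and $p_L$ is essentially constant there. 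Here we use $\kappa>-1$ together with $L\sup\delta_L\to0$.

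Terms with $k\geq2$ perturbed sites are bounded by $\binom Lk\prod p_L(n_i)$ summed over configurations. With $k$ sites sharing the excess mass, this is at most about $L^{k}L^{-k\gamma}L^{k\kappa+k-1}$, relative to $L^{1+\kappa-\gamma}$ for $k=1$. Each extra perturbed site costs a factor of order $L^{1+\kappa+1-\gamma}\cdot L^{-1}$ or so, which is small exactly when $\gamma>\kappa+2$. The lower bound on $\kappa+2>1$ is needed to handle the small-$n$ contributions where $(n+1)^\kappa$ is not summable. We expect the careful bound on the full sum over $k\geq2$ to be the main obstacle. The plan is to bound $\sum_k$ by a geometric series in $k$ using
\[
\sum_{n_1+\dots+n_k=M}\prod_i (n_i+1)^\kappa\le C^k M^{k(\kappa+1)-1}\ ,
\]
with crude but uniform constants, and to use $L^{2+\kappa-\gamma}\to0$ as the ratio. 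Carrying this out gives $Z_{L,N}\simeq\theta(\rho-\rho_c)^\kappa L^{1+\kappa-\gamma}$. The same argument applied to $L-1$ gives the asymptotics of $Z_{L-1,N-n}$, uniformly for $n\leq(\rho-\rho_c-\epsilon)L$.

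\textbf{Step 2: the two atoms.} For fixed $n$, Step 1 gives $Z_{L-1,N-n}/Z_{L,N}\to1$, so $\pi[\tilde\eta_1=n]\simeq n\,w(n)/\rho$. Summing over $n$ gives total mass $\rho_c/\rho$, and since $\sum_n n\,w(n)<\infty$ there is tightness on scale $o(L)$. For $n=aL$ with $|a-(\rho-\rho_c)|\ge\epsilon$ and $a\ge\epsilon$, the bound $Z_{L-1,N-n}\lesssim L^{1+\kappa-\gamma}$ combined with the prefactor $\frac{L}{N}\,n\,p_L(n)=O(L^{1+\kappa-\gamma})$ yields total mass $O(L^{2+2\kappa-2\gamma}\cdot L/L^{1+\kappa-\gamma})\to0$. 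The intermediate range $1\ll n\le\epsilon L$ is handled the same way, with the bulk part $w(n)$ exponentially small. The remaining mass near $a=\rho-\rho_c$ is then forced to be $1-\rho_c/\rho$. As a consistency check, in this window $Z_{L-1,N-n}\simeq\nu_1[\sum=N-n]$ is of local-CLT size $L^{-1/2}$ over a $\sqrt L$ window, and this product with $\frac{L}{N}\,n\,p_L(n)/Z_{L,N}$ indeed gives mass $(\rho-\rho_c)/\rho$.
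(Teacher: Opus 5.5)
The two-atom law for $\tilde\eta_1/L$ is derived as in the paper: the expansion by perturbed sites is \eqref{toshow}, your Step 1 is Lemma \ref{zasymp3}, and Step 2 is the split into $S_1,S_2,S_3$. The genuine gap is the passage to $\max_x\eta_x$.

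Your claim that at most one site can carry more than $(\rho-\rho_c)L/2$ particles is false. Two such sites need only about $(\rho-\rho_c)L<N$ particles. More fundamentally, the law of $\tilde\eta_1$ only encodes the expectations $\langle\pi_{L,N},\frac1N\sum_x\eta_x\mathbbm{1}\{\eta_x\in B\}\rangle$. It cannot detect fluctuations in the number of macroscopic sites. Suppose $\rho\le 2\rho_c$, and consider a law under which:
\begin{itemize}
\item with probability $1/2$, two sites carry $(\rho-\rho_c)L$ each and the rest is in the bulk;
\item with probability $1/2$, there is no large site.
\end{itemize}
This law gives exactly the same two-atom limit for $\tilde\eta_1/L$, yet $\max_x\eta_x/L$ is $0$ or $\rho-\rho_c$ with probability $1/2$ each.

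The one-point law does give the upper bound, via $\pi_{L,N}[\tilde\eta_1>(\rho-\rho_c+\delta)L]\ge\frac{(\rho-\rho_c+\delta)L}{N}\,\pi_{L,N}[\max_x\eta_x>(\rho-\rho_c+\delta)L]$. The lower bound needs multi-site information. The paper obtains it by iterating \eqref{sbformula}. After a bulk pick $n_1=o(L)$, the system $(L-1,N-n_1)$ has the same limiting density. So the first $m$ picks are all bulk with probability tending to $(\rho_c/\rho)^m$, and a site of size $\approx(\rho-\rho_c)L$ exists with probability at least $1-(\rho_c/\rho)^m-o(1)$.

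A more direct fix uses your own Step 1 at the level of measures. Expanding $\prod_x(w+p_L)(\eta_x)$ writes $\pi_{L,N}$ as a mixture over the set of perturbed sites, and your estimates show the one-site component has weight $1-o(1)$. Under that component, the other sites are i.i.d.\ $w$ conditioned on a typical sum, so their maximum is $o(L)$ with high probability. The single perturbed site carries $(\rho-\rho_c+o(1))L$ particles.

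There is also a smaller hole in Step 2. Step 1 only controls $Z_{L-1,N-n}$ for $n\le(\rho-\rho_c-\epsilon)L$. Your complement argument also needs the range $n\ge(\rho-\rho_c+\epsilon)L$, where $(N-n)/(L-1)$ is subcritical and your Step 1 says nothing. There $Z_{L-1,N-n}$ is exponentially small by a Chernoff bound with $\phi=\Phi\big((N-n)/(L-1)\big)<1$, using $z_L(\phi)^L=z(\phi)^L e^{O(L^{1-\gamma})}$ and $\gamma>1$. Alternatively, turn your window ``consistency check'' into a rigorous lower bound on the mass near $(\rho-\rho_c)L$. Together with $S_2\le 1-S_1$, that is exactly how the paper concludes, and it makes the subcritical range vanish automatically.
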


Our main results are illustrated Figure \ref{fig:phasediagram}. 
For the boundary case $\gamma =\kappa +2$ we expect several macroscopic clusters with a hierarchical distribution. This has been identified as a Poisson-Dirichlet distribution only for a certain class of lattice gases with $\gamma =1$ and $\kappa=-1$ \cite{bib_CGG22} which includes the inclusion process \cite{bib_JCG20}. Rigorous results shown in red are expected to extend to $\kappa <-1$, which would be consistent with classical results on condensation with a single cluster in \cite{bib_JMP00, bib_GSS03, bib_AL09, bib_AGL13} for $\kappa <-2$.

\begin{figure}
    \centering
\includegraphics[width=0.5\linewidth]{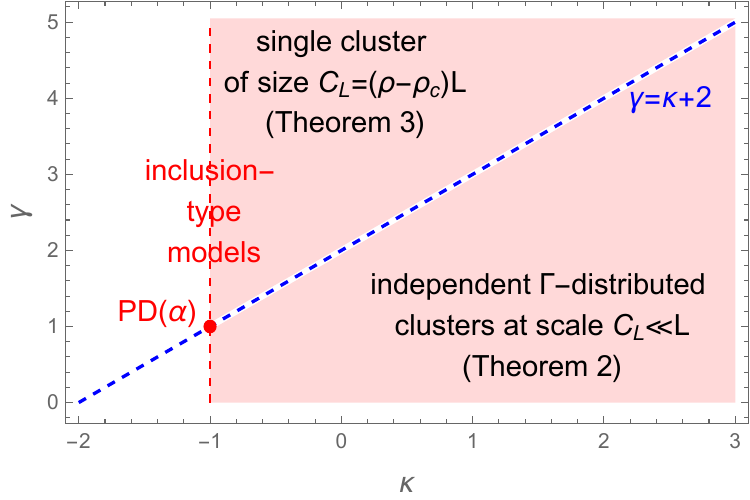}
    \caption{Phase diagram of lattice gases with stationary product weights of the form \eqref{weights}. Rigorous results of Theorem \ref{cluster} and Theorem \ref{thm3} apply in the red regions (excluding the transition line) and additional results \cite{bib_CGG22} for inclusion-type models hold for $\kappa =-1$, including a hierarchical Poisson-Dirichlet distribution PD$(\alpha )$ at the transition point (cf.\ Section \ref{sec_ips}).}
    \label{fig:phasediagram}
\end{figure}

% \medskip
\subsection{Examples of lattice gases\label{sec_ips}}

Natural examples of lattice gases on the state space $E_L =\mathbb{N}_0^\Lambda$ of particle configurations have generators of the form\vspace*{-2mm}
\begin{equation}
	\begin{aligned}
		\mathcal{L}f(\eta) = \sum_{x,y \in \Lambda} q(x,y) c(\eta_x, \eta_y) \left( f(\eta^{x y}) - f(\eta)\right) \, .
		\label{eq_generator}
	\end{aligned}\vspace*{-2mm}
\end{equation}
Here $\eta \in E_L$, $f \in C_b(E_L )$ is a bounded continuous test function and $\eta^{x y}_z = \eta_z -\delta_{z,x}+\delta_{z,y}$ is the configuration after a particle has moved from site $x$ to $y$. The kernel $q(x,y)\geq 0$ describes the spatial connections on $\Lambda$ and the kernel $c(n,m)\geq 0$ the interactions between particles. We assume in fact $c(n,m)>0$ for all $n\geq 1,m\geq 0$ and that $q$ is irreducible on $\Lambda$, so that the number of particles $N$ is the only conserved quantity with unique canonical stationary measures $\pi_{L,N}$. It is well known \cite{bib_C85,Saada2016} that for spatial homogeneity with $\sum_{y\in\Lambda} q(x,y)=\sum_{y\in\Lambda} q(y,x)$ for all $x\in\Lambda$, $\pi_{L,N}$ is a conditional product measure if and only if the kernel $c(n,m)$ satisfies the following:
	\begin{itemize}
		\item it has zero range interaction $c(n,m) = g(n)$ \hspace{3pt} \textbf{or}
		\item a curl free condition holds
		\begin{equation}
			\frac{c(n,m-1)}{c(m,n-1)} = \frac{c(n, 0)}{c(1, n-1)}\frac{c(1,m-1)}{c(m,0)} \qquad \text{for all } n,m \geq 1\ ,
			\label{eq_curl_free}\vspace*{-2mm}
		\end{equation}
		\textbf{and} the process is either 
		\begin{itemize}
			\item reversible with $q(x,y) = q(y, x)$ \hspace{3pt} \textbf{or}
			\item %spatially homogeneous \eqref{eq_irreducibility_function} and \\ 
            $c(n,m) - c(m,n) = c(n,0) - c(m,0)$\quad holds for all $n,m\geq 0$\ .
		\end{itemize}
	\end{itemize}
In this case the stationary weights are given by
\begin{equation}\label{statw}
    w(0)=1\ ,\quad w(n) = \prod^n_{m=1} \frac{c(1, m-1)}{c(m,0)}\ ,\quad n\geq 1\ .
\end{equation}

\begin{figure}
	\centering   \includegraphics[width=0.7\linewidth]{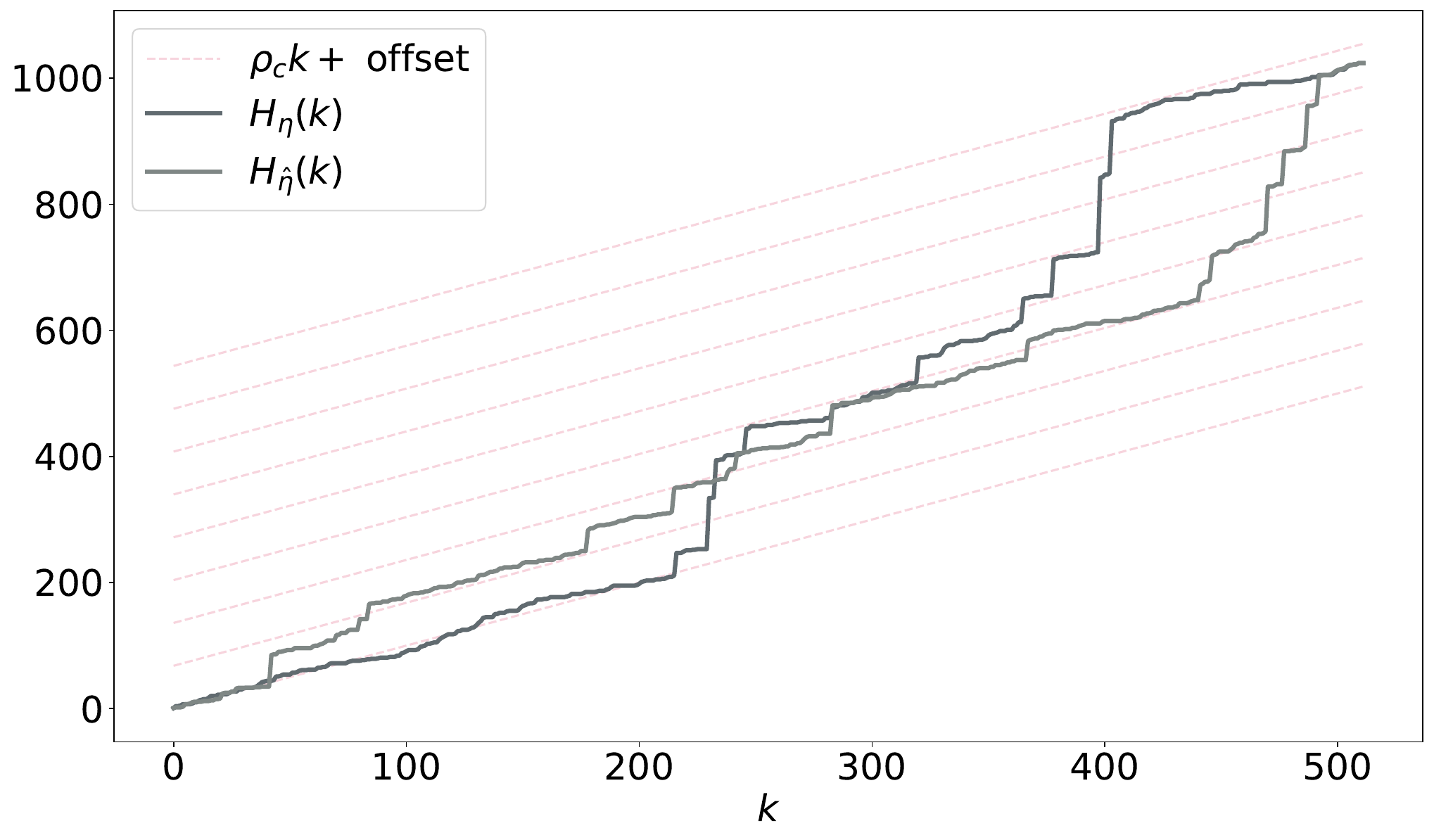}
	\caption{Accumulated density profiles $H_{\eta}(k) = \sum_{x=1}^k \eta_{x}$ on $\Lambda = \{1, \dots\ L\}$ for two independent realizations $\eta ,\eta'$ of $\pi_{L,N}$ with weights $w_L$ given in \eqref{weights} and parameters $L = 512$, $N=1024$, $\kappa = 0.5$, $\gamma =1$ and $\theta = 0.1$. Bulk weights are $w(n)=2^{-(n+1)}$ and typical profiles show various clusters with a background at density $\rho_c =1$ \eqref{wmoments}.
	}
	\label{fig:cumsum}
\end{figure}
% \fig{../img/cumsum.pdf}

A natural example of such processes with size-dependent kernel (and therefore weights) is the inclusion process (see \cite{bib_JCG20} and reference therein) where
\[
c(n,m)=n(d_L +m)\quad\mbox{with a size-dependent parameter }d_L >0\ ,
\]
that corresponds to a (small) mutation rate in applications in population genetics. This system is not quite in the class of models considered in this paper, but its special structure allows for an explicit formula $Z_{L,N} =\frac{\Gamma (N+d_L L)}{N!\Gamma (d_L L)}$ of the partition function enabling the same analysis as done here more directly. It exhibits condensation with $\rho_c =0$ for $d_L \to 0$, and with $d_L L\to \infty$ size-biased clusters on scale $C_L =1/d_L \ll L$ have an exponential distribution corresponding to the case $\kappa =-1$ in our setting. For $d_L L\to 0$ the model exhibits a single large cluster of size $\rho L$ and for $d_L L\to\alpha >0$ several clusters have asymptotically a Poisson-Dirichlet distribution PD$(\alpha )$ on the macroscopic scale $C_L =L$ as is illustrated in Figure \ref{fig:phasediagram}. For details see \cite{bib_JCG20} and \cite{bib_CGG22}, where the Poisson-Dirichlet case has been generalized to a larger class of ``inclusion-like" models. For the inclusion process also the dynamics of cluster formation is well understood \cite{grosskinsky2013dynamics,chleboun2024}, which has recently also been extended to related models with a non-trivial bulk and $\rho_c >0$ \cite{gabriel2025modulatedpoissondirichletdiffusionsarising}.

In \cite{JG2024ZRPfastrate} a zero-range process with rates
\[
c(n,m)=g(n)=\begin{cases}
    1&,\ n=1,\ldots ,A-1\,,\\
    \theta L^\gamma &,\ n=A\,,\\
    1&,\ n\geq A+1\,,
\end{cases}
\]
that are diverging as $L^\gamma$ for a particular occupation number $A\geq 1$ has been shown to exhibit condensation with a cluster scale $C_L =L^{\gamma /2}\ll L$ for $\gamma\in (0,2)$ and a $\Gamma (2,1)$-distribution of cluster sizes. This model fits exactly in the framework of this paper with $\kappa =0$, and for $\gamma >2$ exhibits a single large cluster with mass fraction $\rho -\rho_c$. The transition at $\gamma =2$ is also believed to show a hierarchical structure similar to the Poisson-Dirichlet statistics in the inclusion process. But as in the more general approach covered in this paper, this transition is hard to analyse and has only been established for the inclusion-type models so far.

\begin{figure}
	\begin{center}\mbox{\includegraphics[width=0.48\textwidth]{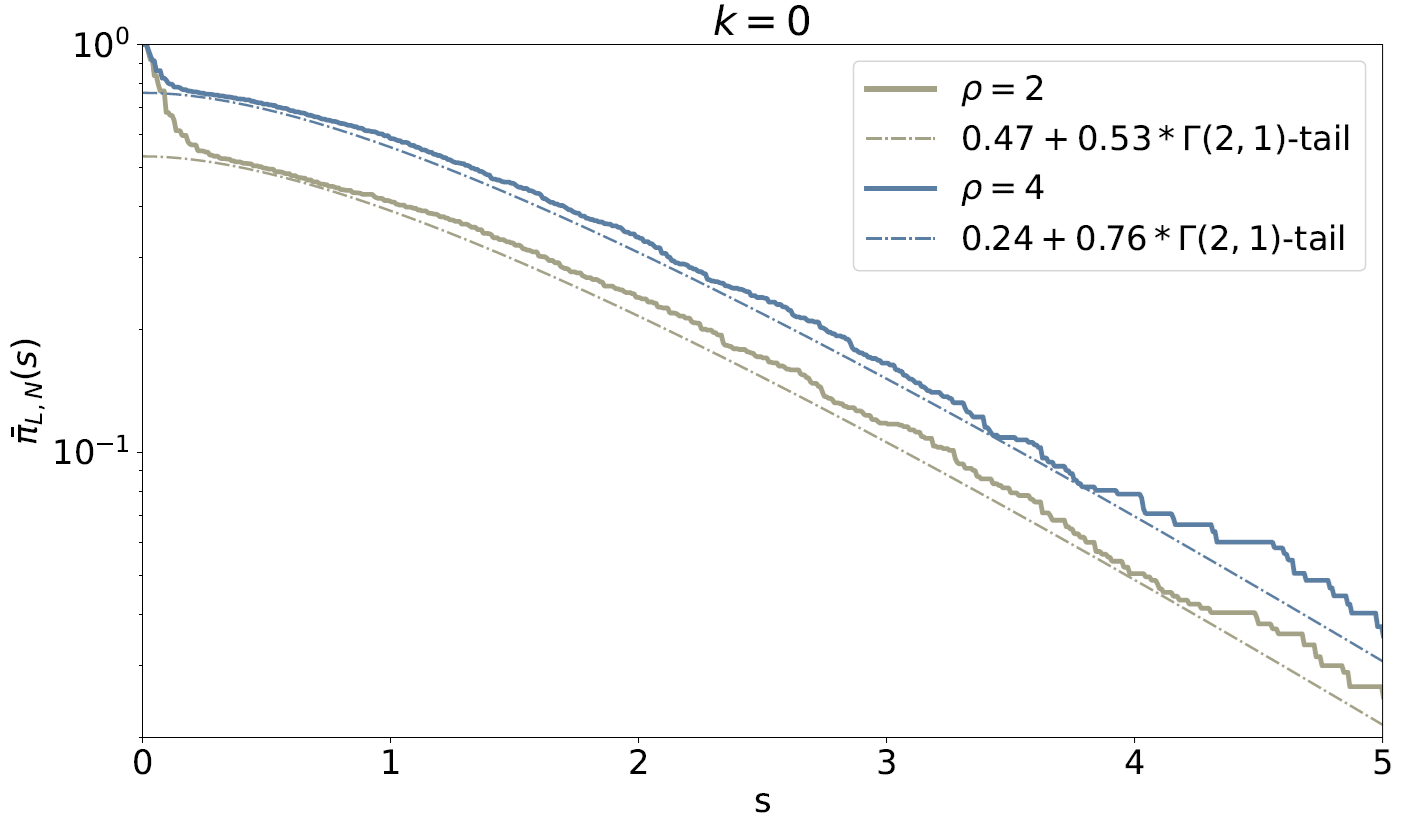}\quad\includegraphics[width=0.48\textwidth]{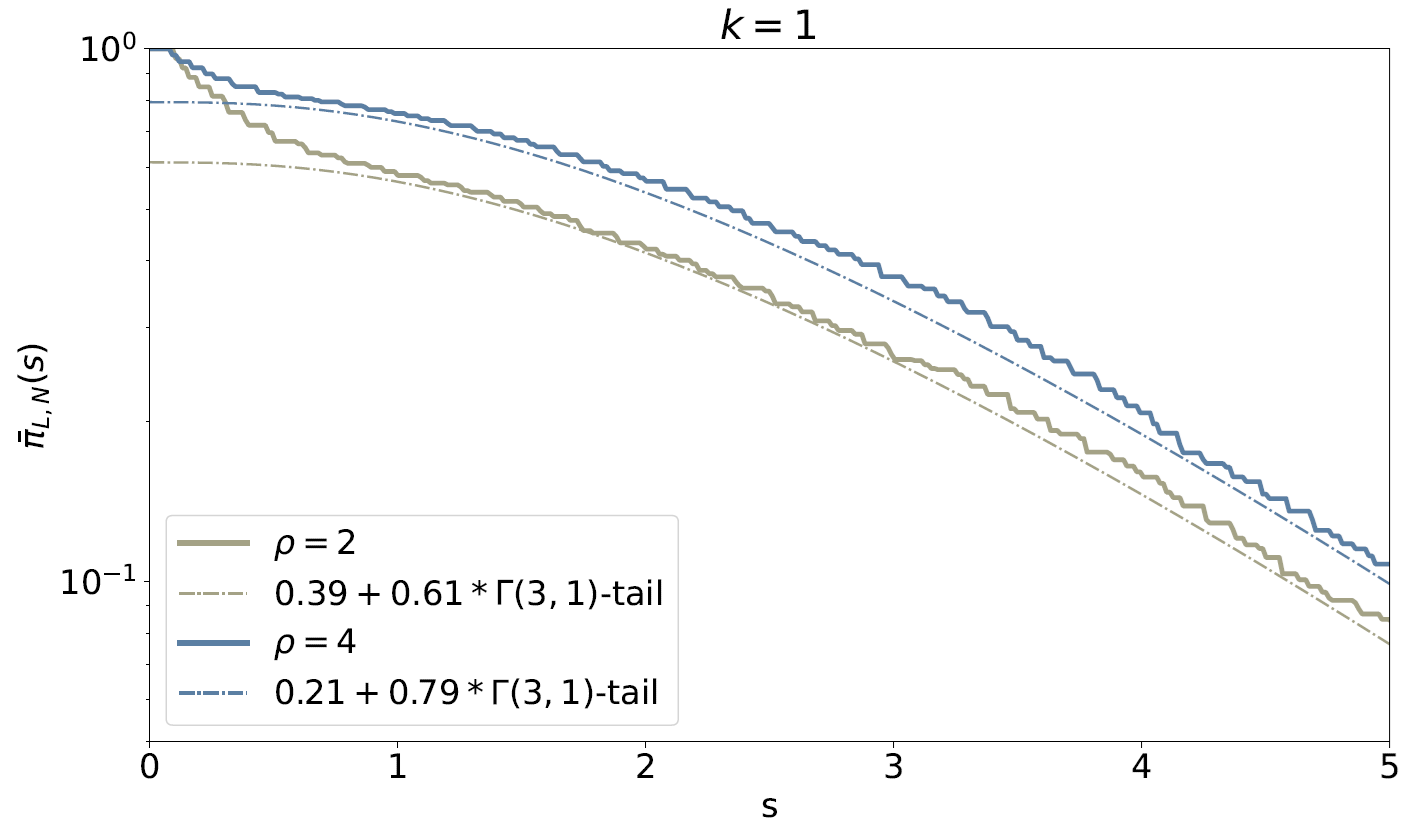}}
	\end{center}
	\caption{\label{fig_clusters}
	Size-biased empirical tail distribution $\bar{\pi}_{L,N}(s)=\frac{1}{N}\, \sum_{x \in \Lambda} \eta_x \mathbbm{1}\{\eta_x > C_L s\}$ on scale $C_L$ averaged over $48$ realizations of $\pi_{L,N}$ with weights $w_L$ given in \eqref{weights}, parameters $L = 512$, $\gamma =1$ and $\theta = 0.1$ and bulk weights $w(n)=2^{-(n+1)}$. The tails illustrate the result in Theorem \ref{cluster} and are compared to predicted asymptotic Gamma distributions, corrected for finite-size effects by using $R_L (1)$ \eqref{rl} truncated at $n\leq\rho L$ instead of $\rho_c =1$ \eqref{weights}. Note that finite-size effects are significantly larger for $\kappa =1$ with cluster scale $C_L \propto L^{1/3}$ than for $\kappa =0$ with $C_L \propto L^{1/2}$.
 %    (Right) The mean scaled tail of the ZRP simulations with the same $\rho$ and the predicted asymptotic distribution using the mean of all $\rho_{c, L}$.
	% (Yellow Lines) Simulations where $\rho = 2$. (Blue Lines) Simulations where $\rho = 4$.
}
\end{figure}

\begin{figure}
\begin{center}
	\includegraphics[width=0.7\textwidth]{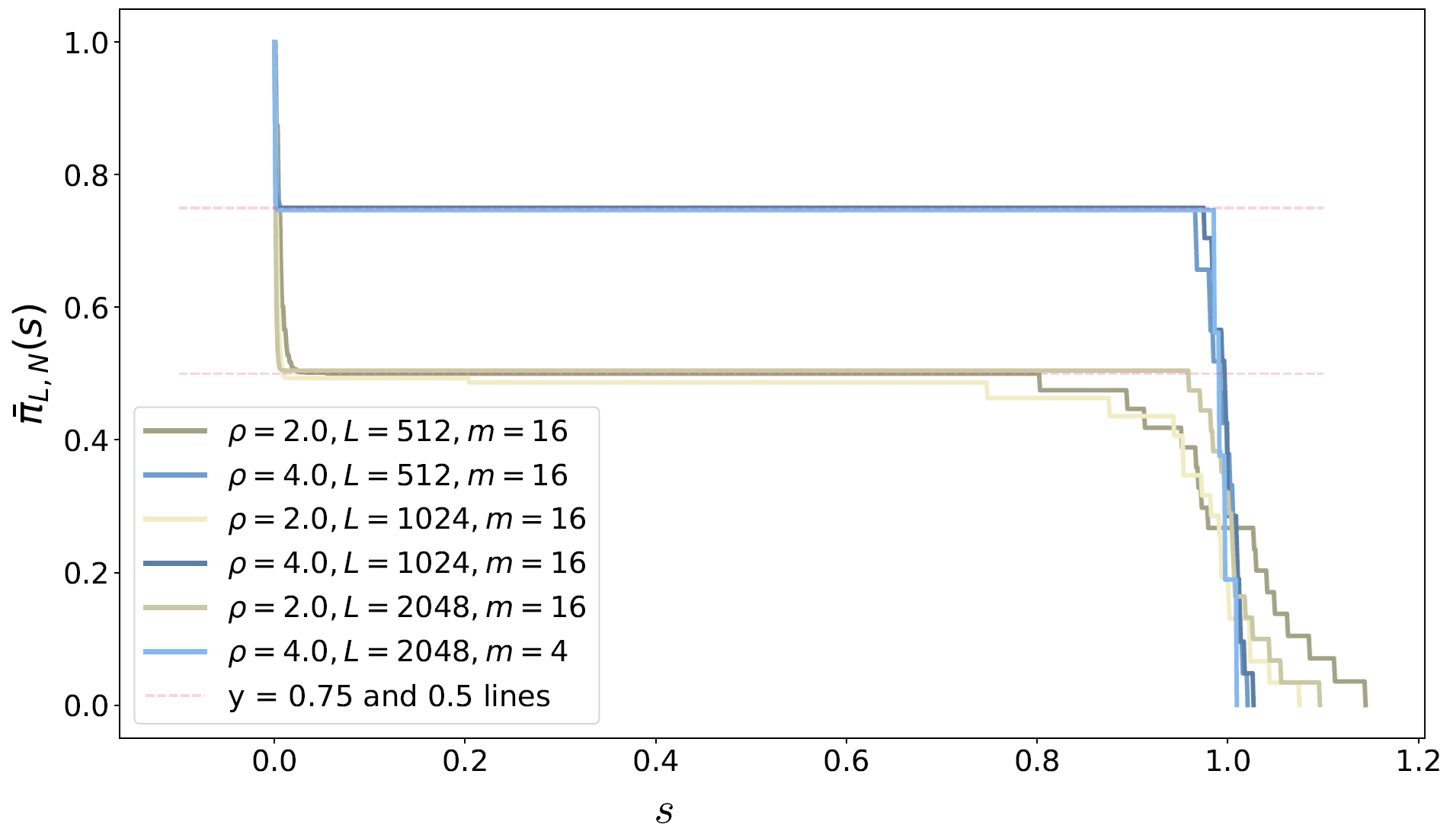}
\end{center}
\caption{\label{fig_conjecture}
	Size-biased empirical tail distribution $\bar{\pi}_{L,N}(s)=\frac{1}{N}\, \sum_{x \in \Lambda} \eta_x \mathbbm{1}\{\eta_x > C_L s\}$ on scale $C_L =(\rho -\rho_c )L$ averaged over $16$ realizations of $\pi_{L,N}$ each with weights $w_L$ given in \eqref{weights}, parameters $L = 512$, $\kappa =-1$, $\gamma =2$ and $\theta = 0.1$ and bulk weights $w(n)=2^{-(n+1)}$. The tails illustrate the result in Theorem \ref{thm3} and are compared to the predicted asymptotic step function for the tail. The fraction of mass in the condensate is $(\rho -\rho_c )/\rho =0.75$ and $0.5$, respectively, denoted by dashed red lines.
}
\end{figure}

Due to \eqref{statw}, for any given weight function $w(n)>0$ one can define a zero-range process with rates
\[
g(n)=g(1)\frac{w(n-1)}{w(n)}\ ,\quad n\geq 1
\]
that has stationary stationary product weights equal to $w$. Therefore we can directly sample from the canonical stationary measures $\pi_{L,N}$ \eqref{canon} with weights as in \eqref{weights} using a corresponding zero-range process. As opposed to the above examples, the dynamics of these models is not generic or interesting in itself, we simply use them as a means for sampling from $\pi_{L,N}$. 
Typical realizations are illustrated in Figure \ref{fig:cumsum} and in Figures \ref{fig_clusters} and \ref{fig_conjecture} we show empirical tail distributions from several realizations to illustrate Theorem \ref{cluster} and Theorem \ref{thm3}.
% We see an example of this effect with the ZRP in Figure \ref{fig:cumsum} below. 

\section{Proofs of the main results}\label{sec_proof}

\subsection{Proof of Theorem \ref{cluster}}

The central part of the proof are asymptotics for the ratio of partition functions.
For sequences $a_k ,b_k$ we write $a_k \ll b_k$ if $a_k /b_k \to 0$ and $a_k \simeq b_k$ if $a_k /b_k \to 1$ as $k\to\infty$.
% We write $f(L) \ll g(L)$ if there exists $C>0$ such that $f(L) \leq C g(L)$ for all $L$, and $f(L) \simeq g(L)$ if $\lim_{L\to \infty} f(L)/g(L) =1$.
\vspace*{-3mm}
\begin{lemma}
	\label{corollary_quotients}
	For weights $w_L(n)$ as in \eqref{weights} with $\kappa +2>\gamma \wedge 1$ we get, for the ratio of partition functions of the canonical distribution \eqref{canon} in the thermodynamic limit \eqref{thermo}, with density $\rho > \rho_c$ \eqref{wmoments}, for all $n=n_L \leq N$
	\begin{equation*}
		\frac{Z_{L-1, N - n}}{Z_{L,N}} \simeq \exp(-n/C_L)\to \begin{cases}
			1&,\ n\ll C_L ,\mbox{ in particular }n=O(1)\,,\\
			e^{-s} &,\ n\simeq C_L s\,, \\
			0 &,\ n\gg C_L\,,
		\end{cases}
	\end{equation*}
%	and thus
%	\begin{equation*}
%		\begin{aligned}
%			&\frac{Z_{L-1, N - n}}{Z_{L,N}} \to 1        &&\quad \text{for} \quad n = O(1) \qquad \text{and} \\
%			&\frac{Z_{L-1, N - n}}{Z_{L,N}} \to \exp(-s) &&\quad \text{for} \quad n = O(C_L\,s)  \, .
%		\end{aligned}
%	\end{equation*}
	where $C_L = \left(\dfrac{(\rho - \rho_c) \, L^\gamma}{\theta \, \Gamma(\kappa +2)}\right)^{1/(\kappa +2)}$ is the cluster scale from Theorem \ref{cluster}.
%	scaling factor from \eqref{eq_def_C_L}.
\end{lemma}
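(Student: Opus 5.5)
We plan to obtain precise asymptotics for $Z_{L,N}$ itself in the form
\[
Z_{L,N}\simeq A_L\,\exp\big(N/C_L\big)\times(\text{factor depending smoothly on }L)\ ,
\]
via a grand-canonical tilt, and then take the ratio. Concretely, we write $Z_{L,N}=z_L(\phi)^L\phi^{-N}\,\nu^L_\phi[N_\Lambda=N]$ for any $\phi\in(0,1)$. We choose $\phi_L=e^{-1/C_L}$, so that $1-\phi_L\simeq 1/C_L$. The key computation is
\[
z_L(\phi_L)=z(\phi_L)+\frac{\theta}{L^\gamma}\sum_{n}(n+1)^\kappa\phi_L^n(1+\delta_L(n))\simeq z(\phi_L)+\frac{\theta\Gamma(\kappa+1)C_L^{\kappa+1}}{L^\gamma}\ ,
\]
and similarly
\[
L\big(R_L(\phi_L)-R(\phi_L)\big)\simeq L\,\frac{\theta\Gamma(\kappa+2)C_L^{\kappa+2}}{L^\gamma}=(\rho-\rho_c)L\ ,
\]
by the definition of $C_L$. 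This is exactly how $C_L$ is calibrated: the tilted grand-canonical measure $\nu_{\phi_L}^L$ has mean total mass $\simeq\rho_c L+(\rho-\rho_c)L$, i.e.\ $N$ to leading order. Corrections $R(\phi_L)-\rho_c=O(1/C_L)$ and the $\delta_L$ terms (using $L\sup\delta_L\to0$) must be shown to be negligible on the relevant scale.

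Next, we write the ratio as
\[
\frac{Z_{L-1,N-n}}{Z_{L,N}}=\frac{\phi_L^{n}}{z_L(\phi_L)}\,\frac{\nu^{L-1}_{\phi_L}[N_{\Lambda'}=N-n]}{\nu^{L}_{\phi_L}[N_\Lambda=N]}\ .
\]
Since $z_L(\phi_L)\to z(1)=1$ (the perturbation term is $O(C_L^{\kappa+1}L^{-\gamma})=O(C_L^{-1})\to0$), and $\phi_L^n=e^{-n/C_L}$ exactly, it remains to show that the ratio of the two probabilities tends to $1$, uniformly in $n\le N$ (or at least in the form needed for the three stated regimes).

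The main obstacle is this local limit statement. Under $\nu_{\phi_L}$, a single site is a mixture: with probability $1-O(C_L^{\kappa+1}L^{-\gamma})$ it follows the bulk law (finite variance), and with probability $p_L\simeq\theta\Gamma(\kappa+1)C_L^{\kappa+1}L^{-\gamma}$ it is a cluster of size $\approx C_L\,\Gamma(\kappa+1,1)$. Hence $N_\Lambda$ is a sum of a Gaussian bulk part with fluctuations $\sqrt L$ and a compound-Poisson-like cluster part with $\approx Lp_L\propto L/C_L\to\infty$ clusters (using $\gamma<\kappa+2$). The latter has fluctuations of order $C_L\sqrt{L/C_L}=\sqrt{LC_L}$, which dominate $\sqrt L$. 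We would then prove a local CLT for the triangular array: the total variance is $\sigma_L^2\asymp LC_L$, the Lindeberg condition holds because a single summand is $O(C_L)\ll\sqrt{LC_L}$ with high probability and has exponential tails on scale $C_L$, and lattice aperiodicity comes from the bulk part, as in the proof of Theorem~\ref{equivalence} via \cite{bib_DM95}. This gives $\nu^L_{\phi_L}[N_\Lambda=N]\simeq(2\pi\sigma_L^2)^{-1/2}\exp\big(-(N-LR_L(\phi_L))^2/2\sigma_L^2\big)$.

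For the ratio we then need $|N-LR_L(\phi_L)|\ll\sqrt{LC_L}$, or at least that the shift of the mean by $n$ only perturbs the exponent negligibly. This is automatic for $n\ll\sqrt{LC_L}$, which covers $n=O(C_L)$. The mismatch $N-\rho L=o(L)$ is handled by allowing $C_L$ to be defined with $N/L$ in place of $\rho$ (equivalently, choosing $\phi_L$ to solve $LR_L(\phi_L)=N$ exactly); the resulting $\phi_L$ satisfies $\log\phi_L\simeq-1/C_L$, so the prefactor remains $\simeq e^{-n/C_L}$. For the regime $n\gg C_L$ we only need an upper bound. Here we use $\phi_L^n\to0$ together with a crude bound: the probability ratio is at most polynomial in $L$ by the LCLT at $N$ and the trivial bound $\le1$ in the numerator. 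If $n$ is only slightly larger than $C_L$, this needs care, so we would instead re-tilt the numerator with its own optimal $\phi$. The condition $\kappa+2>1$ (i.e.\ $\kappa>-1$) ensures integrability of the Gamma profile. The case $\gamma\le1$ (where $C_L$ may be $\lesssim\sqrt L$ and the bulk fluctuations dominate) is treated the same way, with $\sigma_L^2\asymp L$.
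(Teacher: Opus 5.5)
Your route is correct in substance and genuinely different from the paper's. The paper expands $Z_{L,N}$ in the number $\ell$ of sites carrying the perturbation. It evaluates each term with a Dirichlet-integral asymptotic for the perturbed sites and a local large-deviation estimate for the bulk. It then applies Laplace's method in the perturbed mass (saddle at $n/L=\rho-\rho_c$) and in $\ell$ (saddle at $\ell=s^*(\rho)L^{1-\gamma/(\kappa+2)}$). This gives $Z_{L,N}$ up to a factor $e^{o(L^\alpha)}$, and the ratio is read off from how the leading exponent $L^\alpha(\kappa+2)s^*(\rho)$ changes under $(L,N)\to(L-1,N-n)$.

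Your tilt with $1-\phi_L\simeq 1/C_L$ is the grand-canonical side of the same picture. Indeed $Lp_L\simeq(\rho-\rho_c)L/((\kappa+1)C_L)$ equals the paper's saddle point in $\ell$, and $L\log z_L(\phi_L)-N\log\phi_L\simeq\frac{\kappa+2}{\kappa+1}(\rho-\rho_c)\frac{L}{C_L}$ reproduces its exponent. What you gain is that the large factors $z_L(\phi_L)^L\phi_L^{-N}$ cancel identically in the ratio. You are left with $\phi_L^n/z_L(\phi_L)$ times a ratio of two point probabilities, handled by one local limit theorem. The paper instead has to make the $o(L^\alpha)$ errors in its exponent cancel to $o(1)$ between $(L,N)$ and $(L-1,N-n)$. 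What the paper's expansion buys is an explicit asymptotic for $Z_{L,N}$ and a framework that also covers $\gamma>\kappa+2$ (Lemma~\ref{zasymp3}). There $Lp_L\to0$, a single perturbed site dominates, and no product tilt describes the condensate.

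Three points in your sketch need adjusting.

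\emph{Justification of the local limit theorem.} It cannot be justified as in Theorem~\ref{equivalence} by bulk aperiodicity and \cite{bib_DM95}. There the tilt was chosen so that the one-site variance stays bounded. Under $\nu_{\phi_L}$ the one-site variance is $\asymp C_L$, and the overlap of neighbouring bulk weights only smooths $N_\Lambda$ on scale $\sqrt L\ll\sigma_L\asymp\sqrt{LC_L}$. What makes the theorem true is that the cluster component $(n+1)^\kappa\phi_L^n$ is itself smooth on scale $C_L$. A direct Fourier argument works:
\begin{itemize}
\item For $|t|\le\epsilon/C_L$, a second-order expansion (the third absolute moment is $\asymp C_L^2$) gives $|\psi_L(t)|\le e^{-ct^2C_L}$.
\item For $\epsilon/C_L\le|t|\le\pi$, the characteristic function of the normalized cluster weights has modulus at most $1-c_\epsilon$ uniformly in $L$. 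Hence $|\psi_L(t)|^L\le e^{-c_\epsilon Lp_L}$ with $Lp_L\asymp L^{1-\gamma/(\kappa+2)}$, which is negligible against $1/\sqrt{LC_L}$.
\end{itemize}
This is where $\gamma<\kappa+2$ is really used.

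\emph{The case $\gamma\le1$.} Your remark here is incorrect. Since $C_L\to\infty$, the cluster variance $\asymp LC_L$ dominates the bulk variance $\asymp L$ for every $\gamma\in(0,\kappa+2)$. So $\sigma_L^2\asymp LC_L$ throughout, and no separate case arises.

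\emph{The regime $n\gg C_L$.} No re-tilting is needed. The uniform (supremum) form of the local limit theorem bounds $\nu^{L-1}_{\phi_L}[N_{\Lambda'}=N-n]$ by $(1+o(1))(2\pi\sigma_{L-1}^2)^{-1/2}$ for every $n$. Hence $Z_{L-1,N-n}/Z_{L,N}\le(1+o(1))\phi_L^n$ uniformly in $n\le N$, which settles the third regime. For $n=O(C_L)$ the Gaussian factor is $1+o(1)$ because $C_L\ll\sqrt{LC_L}$.
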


\noindent\textbf{Proof of Theorem \ref{cluster}.} 
We split the cumulative distribution function of $\tilde{\eta}_1/C_L$ (cf.\ \eqref{sbformula}) into the main and perturbative contributions. For fixed $u > 0$ we have
\begin{align*}
\pi_{L,N}\left[\tilde{\eta}_1 \leq C_L u\right]  =&\underbrace{\sum_{n \leq C_L u} \frac{L}{N} n w(n) \frac{Z_{L-1,N-n}}{Z_{L,N}}}_{:=S_1} +\underbrace{\sum_{n \leq C_L u} \frac{L}{N} \frac{\theta}{L^\gamma} n(n+1)^\kappa \frac{Z_{L-1,N-n}}{Z_{L,N}}}_{:=S_2}\\
&+\underbrace{\sum_{n \leq C_L u} \frac{L}{N} \frac{\theta}{L^\gamma} n(n+1)^\kappa \delta_L (n) \frac{Z_{L-1,N-n}}{Z_{L,N}}}_{:=S_3} \ .
\end{align*}
% and the polynomial part and estimate them separately.
%\begin{equation*}
%	\begin{aligned}
%		\pi_{L,N}\left[\tilde{\eta}_1 \leq C_L u\right] 
%		&= \sum_{n \leq C_L u} \frac{L}{N} n w_L (n) \frac{Z_{L-1,N-n}}{Z_{L,N}}\\
%		&= \sum_{n \leq C_L u} \frac{L}{N} n w(n) \frac{Z_{L-1,N-n}}{Z_{L,N}}\\
%		&\qquad + \sum_{n \leq C_L u} \frac{L}{N} \frac{\theta}{L} n(n+1)^k \frac{Z_{L-1,N-n}}{Z_{L,N}} \\
%		&=: S_1 + S_2
%	\end{aligned}
%\end{equation*}
We split $S_1$ again into contributions from bulk and cluster sites, introducing an intermediate auxiliary scale $A_L$ such that $A_L \to \infty$ and $A_L/C_L \to 0$,
\begin{equation}\label{split}
S_1 \simeq \frac{1}{\rho} \underbrace{\sum_{n \leq A_L} n w(n) \frac{Z_{L-1,N-n}}{Z_{L,N}}}_{:=S_{1,1}} + \frac{1}{\rho} \underbrace{\sum_{A_L < n \leq C_L\,u} n w(n) \frac{Z_{L-1,N-n}}{Z_{L,N}}}_{:=S_{1,2}}\ .
\end{equation}
For the first contribution, Lemma \ref{corollary_quotients} implies pointwise convergence of $\frac{Z_{L-1,N-n}}{Z_{L,N}} \mathbbm{1}\{ n\leq A_L\}\to 1$ for all $n\leq A_L$ and that for large enough $L$ this function is bounded by $1+\epsilon$ for some $\epsilon >0$. So we can use dominated convergence, and the definition of $\rho_c$ in \eqref{wmoments}, to get
\[
S_{1,1} \to\sum_{n\geq 0} n w(n) =\rho_c \,.
\]
Similarly, Lemma \ref{corollary_quotients} also implies that the remainder term vanishes as
%\begin{equation*}
%	\begin{aligned}
%		S_1 &\simeq \frac{1}{\rho} \sum_{n \leq C_L u} n w(n) \frac{Z_{L-1,N-n}}{Z_{L,N}} \\
%		&= \frac{1}{\rho} \sum_{n \leq A_L} n w(n) \frac{Z_{L-1,N-n}}{Z_{L,N}} + \frac{1}{\rho} \sum_{A_L \leq n \leq C_L\,u} n w(n) \frac{Z_{L-1,N-n}}{Z_{L,N}} \\
%		&\simeq \frac{1}{\rho} \frac{1}{z(1)} \underbrace{\sum_{n = 0}^\infty  n w(n)}_{=z'(1)} + \frac{1}{\rho} \sum_{A_L \leq n \leq C_L\,u} n w(n) \frac{Z_{L-1,N-n}}{Z_{L,N}} \\
%		&= \frac{\rho_c}{\rho} + S_{1, 2} \, .
%	\end{aligned}
%\end{equation*}
\begin{equation*}
	\begin{aligned}
		0 \le S_{1, 2} &\le \sum_{n> A_L} n w(n) \frac{Z_{L-1,N-n}}{Z_{L,N}} \le (1+\epsilon )\sum_{n> A_L} n w(n)\to 0\ .
	\end{aligned}
\end{equation*}
% since $e^{-s}\leq 1$. 
%The last limit follows trivially from the form of $w$ as defined in \eqref{eq_w_n_exp}, making $n w(n)$ summable. 
So $S_1 \to\rho_c /\rho$. 
For $S_2$, we substitute $n = C_L s$ , and use $N/L\to \rho$, to get:
\begin{align*}
		S_2 
%		&\simeq \frac{1}{\rho} \frac{\theta}{L} \sum_{n \leq C_L u}  n(n+1)^k \frac{Z_{L-1,N-n}}{Z_{L,N}} \\
		&\simeq \frac{1}{\rho} \frac{\theta}{L^\gamma} \sum_{\substack{s \leq u \\ s \, \in \, \mathbb{N}/C_L}} (C_L)^{\kappa +1} s^{\kappa +1} \frac{Z_{L-1,N-C_L s}}{Z_{L,N}} \\
		&%\overset{(\star)}{\simeq} 
		\simeq\frac{C_L^{k+2}}{\rho} \frac{\theta}{L^\gamma}  \sum_{\substack{s \leq u \\ s \, \in \, \mathbb{N}/C_L}} \frac{1}{C_L} s^{\kappa +1} e^{-s} \to \frac{\rho-\rho_c}{\rho} \int_0^u \frac{s^{\kappa +1}e^{-s}}{\Gamma(\kappa +2)} \, ds\ ,
\end{align*}
where the second line follows from Lemma \ref{corollary_quotients}, $\frac{Z_{L-1,N-C_L s}}{Z_{L,N}} \simeq \exp\left(-s\right)$ and applying  $C_L = \left(\dfrac{(\rho - \rho_c) \, L^\gamma}{ \theta \, \Gamma(\kappa +2)}\right)^{1/(\kappa +2)}$.

Since $S_2$ converges, it follows immediately that $S_3 \leq\sup_{n\leq N} \delta_L (n) S_2\to 0$. 
The result for $\tilde{\eta}_1, \dots , \tilde{\eta}_m$ with fixed $m \in \mathbb{N}$ can then be established iteratively using \eqref{sbformula} to factorize the canonical size-biased distribution.
%apart and then use the same argument as above for each term. For details see the proof of Theorem 2 in \cite{bib_JG24} the proof of Theorem 2 where the authors do exactly that.
\hfill $\Box$

%\subsection{Proof of Lemma \ref{corollary_quotients}}
\medskip
\subsection{Preparation for the Proof of Lemma \ref{corollary_quotients}}
\label{sec_prep_and_proof}

In this section, we collect some auxiliary results that are necessary to derive the scaling of $Z_{L, N}$ in the thermodynamic limit \eqref{thermo}, which is stated in Lemma \ref{corollary_quotients}. First, note that by assumption
\[
L\sup_{n\leq N} \delta_L (n)\to 0\,,\quad\mbox{which implies}\quad \sup_{n\leq N}(1+\delta_L (n))^L \to 1
\]
in the thermodynamic limit. 
Therefore, any relevant contribution from $\delta_L (n)$ to the scaling of $Z_{L,N}$ vanishes and we will simply set $\delta_L (n)\equiv 0$ from now on, greatly simplifying the notation in the following. We then have
% first, we write out the definition of $Z_{L, N}$ and work on restructuring its terms such that we can examine them asymptotically in the next step.
\begin{equation*}
	\begin{aligned}
		Z_{L,N} 
		&= \sum_{\eta \in E_{L,N}} \prod_{x \in \Lambda} w_L(\eta_x)= \sum_{\eta \in E_{L,N}} \prod_{x \in \Lambda} \left( w(\eta_x) + \frac{\theta}{L^\gamma} (\eta_x+1)^\kappa \right) \\
		&=  \sum_{\eta \in E_{L,N}} \, \bigg[ \prod_{x \in \Lambda} w(\eta_x) + \sum_{y \in \Lambda} \frac{\theta}{L^\gamma} (\eta_y+1)^\kappa \prod_{\substack{x \in \Lambda \\ x \ne y}}w(\eta_x) \\
		&\qquad \qquad\quad +  \sum_{\substack{y_1, y_2 \in \Lambda \\ y_1 \ne y_2}} \frac{\theta^2}{L^{2\gamma}} (\eta_{y_1}+1)^\kappa (\eta_{y_2}+1)^\kappa \prod_{\substack{x \in \Lambda \\ x \ne y_1 \\ x \ne y_2}}w(\eta_x) + \dots \bigg]
	\end{aligned}
\end{equation*}
With the usual notation $w^L \Big(\sum\limits_{x=1}^L \eta_x =N\Big) =\sum\limits_{\eta \in E_{L,N}} \prod\limits_{x \in \Lambda} w(\eta_x)$ for product weights we get
\begin{align}
    Z_{L,N} 
		&= w^L \Big(\sum_{x=1}^L \eta_x =N\Big) +L\sum_{n=0}^N w^{L-1} \Big(\sum_{x=1}^{L-1} \eta_x =N-n\Big) \frac{\theta}{L^\gamma} (n+1)^\kappa +\ldots\nonumber\\
        &=w^L \Big(\sum_{x=1}^L \eta_x =N\Big) +\sum_{\ell =1}^L {L\choose\ell} \frac{\theta^\ell}{L^{\ell\gamma}} \sum_{n=0}^N w^{L-\ell} \Big(\sum_{x=1}^{L-\ell} \eta_x =N-n\Big)\sum_{\substack{n_1 ,\ldots ,n_\ell =0\\ n_1 +..+n_\ell =n}}^n \prod_{y=1}^\ell  (n_y +1)^\kappa \nonumber\\
        &=: S_0 +\sum_{\ell =1}^L {L\choose\ell} \frac{\theta^\ell}{L^{\ell\gamma}} \sum_{n=0}^N S_{\ell ,n}\label{toshow}
        % &=: b_{0,0} +\sum_{\ell = 1}^L {L\choose\ell} \frac{\theta^\ell}{L^{\ell\gamma}}\sum_{n=0}^N b_{\ell, n}\quad\mbox{with}\quad b_{\ell, n}=w^{L-\ell} \Big(\sum_{x=1}^{L-\ell} \eta_x =N-n\Big)\sum_{\substack{n_1 ,\ldots ,n_\ell =0\\ n_1 +\dots+n_\ell =n}}^n \prod_{y=1}^\ell  (n_y +1)^\kappa
\end{align}

\noindent We will first examine the asymptotics of the contribution of the perturbation.

\begin{lemma}
	\label{lemma_sumprod}
	For $\kappa > -1$, we get for $\ell\ll n$ the asymptotics
	\begin{equation*}
		A_\ell(n):=\sum_{\substack{n_1 ,\ldots ,n_\ell =0\\ n_1 +..+n_\ell =n}}^n \prod_{y=1}^\ell(n_y + 1)^\kappa 
			= n^{(\kappa +1) \, \ell - 1} \frac{\Gamma(\kappa +1)^\ell}{\Gamma((\kappa +1) \, \ell)} \big( 1+O(\ell /n)\big)\quad\mbox{as }n\to\infty\ .
	\end{equation*}
%    For $\kappa <-1$ we get\quad $\displaystyle \sum_{\substack{n_1 ,\ldots ,n_\ell =0\\ n_1 +..+n_\ell =n}}^n \prod_{y=1}^\ell(n_y + 1)^\kappa \simeq n^\kappa \ell \zeta (-\kappa )^\ell$\ ,\quad where $\displaystyle\zeta (-\kappa )=\sum_{m=1}^\infty m^\kappa <\infty$\quad is the Riemann zeta function, as long as $n\gg \ell^{-1/(\kappa +1)}$.
For $n/\ell\to u>0$, we get the asymptotic behaviour
\[
A_\ell(n) \simeq C_u \frac{\varphi (u)^{-n} z_\kappa (\varphi (u))^\ell}{\sqrt{\ell }}
% \quad\mbox{with constants }C_u >0,\ \varphi (u)\in (0,1)\ .
\]
with constants $C_u >0$, $\varphi (u)\in (0,1)$ and $z_\kappa (\phi )$ defined in \eqref{zkappa} below.\\
% \textcolor{red}{PC: I believe we also have to control small n? Maybe this is obviously?}
If $n \leq C \ell$ for some $C\geq 1$, there exists $C_0$ independent of $n$ and $\ell$ such that
\begin{equation}\label{finalbound}
   A_\ell(n) \leq C_0^\ell n^{\kappa \ell}\ .
\end{equation}
\end{lemma}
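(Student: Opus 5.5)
The plan is to treat $A_\ell(n)$ as the $n$-th coefficient of a power of a single generating function, and to read off its three regimes from the behaviour of that function. Define $z_\kappa(\phi):=\sum_{m\ge0}(m+1)^\kappa\phi^m$ for $\phi\in[0,1)$. Then $A_\ell(n)=[\phi^n]\,z_\kappa(\phi)^\ell$. Since $\kappa>-1$, $z_\kappa$ diverges at $\phi=1$, with
\[
z_\kappa(\phi)=\Gamma(\kappa+1)(1-\phi)^{-(\kappa+1)}\big(1+O(1-\phi)\big).
\]
This follows by comparing with $\mathrm{Li}_{-\kappa}$ and using the standard expansion $\mathrm{Li}_s(e^{-t})=\Gamma(1-s)t^{s-1}+\zeta(s)+O(t)$.

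\emph{Regime $\ell\ll n$.} This is essentially a Riemann-sum or Dirichlet-integral computation. Write $n_y=n x_y$ on the simplex $\{x_1+\dots+x_\ell=1\}$. Approximating $\sum\prod(n_y+1)^\kappa$ by $n^{\kappa\ell}\cdot n^{\ell-1}\int_{\Delta_{\ell-1}}\prod x_y^\kappa\,dx$ gives the Dirichlet integral $\Gamma(\kappa+1)^\ell/\Gamma((\kappa+1)\ell)$. For a rigorous route with a uniform $O(\ell/n)$ error, I would instead proceed by induction on $\ell$ using the convolution identity $A_{\ell}(n)=\sum_{m}(m+1)^\kappa A_{\ell-1}(n-m)$. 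At each step I would use the Beta-integral comparison $\sum_{m=0}^n(m+1)^{a}(n-m+1)^{b}=n^{a+b+1}B(a+1,b+1)(1+O(1/n))$ for $a>-1$, $b\ge a$, where the relative error comes from the endpoints. The errors multiply to $(1+O(1/n))^\ell=1+O(\ell/n)$. Alternatively, one can use singularity analysis (transfer theorems) on $z_\kappa^\ell$. The care needed here is the uniformity in $\ell$: the endpoint corrections must be $O(1/n)$ with constants independent of $\ell$. I expect this to be the main obstacle, especially for $\kappa<0$, where $(m+1)^\kappa$ is singular-ish near $m=0$ on the scale $n$. I would handle it by bounding the discrepancy near each endpoint separately by a term of relative size $O(1/n)$.

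\emph{Regime $n/\ell\to u$.} This is a local large deviation/saddle-point argument. Choose $\varphi=\varphi(u)\in(0,1)$ solving $\varphi z_\kappa'(\varphi)/z_\kappa(\varphi)=u$; this is possible because this tilted mean increases continuously from $0$ to $\infty$ on $(0,1)$. Then
\[
A_\ell(n)=\varphi^{-n}z_\kappa(\varphi)^\ell\,P\Big(\sum_{y=1}^\ell X_y=n\Big),
\]
where the $X_y$ are i.i.d.\ with law $\propto(m+1)^\kappa\varphi^m$. These have exponential moments, are aperiodic, and have mean $u$ (allowing a shifted $\varphi$ with $n/\ell$ exactly, and continuity in $u$). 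The local CLT then gives $P(\cdot)\simeq 1/\sqrt{2\pi\sigma_u^2\ell}$, so $C_u=(2\pi\sigma_u^2)^{-1/2}$.

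\emph{Bound \eqref{finalbound} for $n\le C\ell$.} Use the Chernoff-type bound $A_\ell(n)\le\phi^{-n}z_\kappa(\phi)^\ell$, valid for any $\phi\in(0,1)$, choosing $\phi=1/2$. This gives $A_\ell(n)\le 2^{C\ell}z_\kappa(1/2)^\ell$. For $\kappa\ge0$ and $n\ge1$ this is at most $C_0^\ell n^{\kappa\ell}$, after also handling $n=0$ directly ($A_\ell(0)=1$), interpreting $n^{\kappa\ell}$ as $\ge1$ or using $(n+1)$. For $\kappa<0$, $n^{\kappa\ell}\ge (C\ell)^{\kappa\ell}$ decays super-exponentially, so I would instead argue combinatorially. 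At most $n$ of the $n_y$ are nonzero, so the number of compositions is at most $\binom{n+\ell-1}{n}\le (e(C+1))^{n}$. Each product is at most $1$, and one must extract $n^{\kappa\ell}$; if this fails, I would accept the bound in the form actually used later in the proof of Lemma \ref{corollary_quotients}.
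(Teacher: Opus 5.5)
There is a genuine gap in the regime $\ell\ll n$, in the per-step estimate your induction rests on. The comparison $\sum_{m=0}^n(m+1)^a(n-m+1)^b=n^{a+b+1}B(a+1,b+1)(1+O(1/n))$ fails in both of the ways you were worried about, so endpoint bookkeeping cannot rescue it.

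First, for $a=\kappa\in(-1,0)$ the singular endpoints produce a relative discrepancy of order $n^{-(\kappa+1)}$. For example, for $\kappa=-\tfrac12$ one has $A_2(n)=\sum_{j=1}^{n+1}\bigl(j(n+2-j)\bigr)^{-1/2}=\pi+2\zeta(\tfrac12)(n+2)^{-1/2}+\dots$, while the claimed main term is exactly $\Gamma(\tfrac12)^2/\Gamma(1)=\pi$. This is the $\zeta(-\kappa)$ term of the expansion of $\mathrm{Li}_{-\kappa}$ that you quote yourself. It gives $z_\kappa(\phi)=\Gamma(\kappa+1)(1-\phi)^{-(\kappa+1)}\bigl(1+O((1-\phi)^{\min(1,\kappa+1)})\bigr)$, not $1+O(1-\phi)$.

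Second, the comparison is not uniform in $b$. The unit shifts alone cost a relative error of order $b/n$, and $b\approx(\kappa+1)\ell$ at the $\ell$-th step, so these errors compound to $\exp(\Theta(\ell^2/n))$. For $\kappa=0$ this is exact: $A_\ell(n)=\binom{n+\ell-1}{\ell-1}=\frac{n^{\ell-1}}{(\ell-1)!}\prod_{j=1}^{\ell-1}(1+j/n)$. For $\ell=\lceil n^{2/3}\rceil$ the ratio of this to the claimed main term diverges, although $\ell\ll n$.

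So the factor $1+O(\ell/n)$ in the statement is itself false, whenever $\ell^2\gtrsim n$, and for $\kappa<0$ already at fixed $\ell\ge2$. The paper's own argument passes from the lattice sum to the Dirichlet integral and simply asserts $1+O(\ell/n)$, so it has the same defect. This is a flaw in the statement rather than an idea you missed relative to the paper, but you should prove the correct version: a relative error $O\bigl(\ell(\ell/n)^{\min(1,\kappa+1)}\bigr)$. Your induction should give this if the per-step error is taken to be $O\bigl((\ell/n)^{\min(1,\kappa+1)}\bigr)$ instead of $O(1/n)$; at fixed $\ell$, transfer theorems for $z_\kappa^\ell$ give it directly. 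Consequently the Dirichlet formula is an asymptotic equivalence only if $\ell(\ell/n)^{\min(1,\kappa+1)}\to0$. In general it holds only up to a factor $\exp\bigl(O(\ell(\ell/n)^{\min(1,\kappa+1)})\bigr)$, which is still harmless at the leading exponential order $L^\alpha$ used later for $Z_{L,N}$.

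Your treatment of $n/\ell\to u$ is the paper's argument (exact tilt, then a local limit theorem), and you share one caveat with it. Continuity in $u$ does not let you replace $\varphi(n/\ell)$ by $\varphi(u)$ in $\varphi^{-n}z_\kappa(\varphi)^\ell$, because that factor is exponential in $\ell$. Tilting directly at $\varphi(u)$ and using the classical i.i.d.\ local limit theorem gives $A_\ell(n)=\varphi(u)^{-n}z_\kappa(\varphi(u))^\ell(2\pi\sigma_u^2\ell)^{-1/2}\bigl(e^{-(n-u\ell)^2/(2\sigma_u^2\ell)}+o(1)\bigr)$. So the stated form requires $n-u\ell=o(\sqrt\ell)$, and with this route no triangular-array theorem is needed.

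For \eqref{finalbound}, your Chernoff bound $A_\ell(n)\le 2^{n}z_\kappa(1/2)^\ell$ differs from the paper's route, which bounds each factor by $(n+1)^\kappa$ and counts the $\binom{n+\ell-1}{\ell-1}\le2^{n+\ell}$ compositions. Both work for $\kappa\ge0$, and yours gives the stronger $A_\ell(n)\le(2^Cz_\kappa(1/2))^\ell$. For $\kappa<0$ the paper's step $(n_y+1)^\kappa\le(n+1)^\kappa$ is wrong, and so is the bound itself. The single composition $n_1=\dots=n_\ell=1$ gives $A_\ell(\ell)\ge2^{\kappa\ell}$, which exceeds $C_0^\ell\ell^{\kappa\ell}$ for large $\ell$ whatever $C_0$ is. You were therefore right not to force out the factor $n^{\kappa\ell}$. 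State the bound as $A_\ell(n)\le(2^Cz_\kappa(1/2))^\ell$: it holds for every $\kappa>-1$, and it is all that is needed where \eqref{finalbound} is used, namely to show that the terms with $n\le C\ell$ are negligible.
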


\begin{proof} 
%\noindent\textbf{Proof.} 
For $\ell\ll n$, we can do a direct asymptotic analysis. Substituting $u_y = \frac{n_y}{n}$, we get
	\begin{equation*}
		\begin{aligned}
			\sum_{n_1 + \dots + n_\ell = n}  \prod_{y=1}^\ell (n_y + 1)^\kappa 
			&= (n+1)^{\kappa \, \ell} \sum_{\substack{u_1 + \dots + u_\ell = 1 \\ u_y \in \mathbb{N}/(n+1)}} \prod_{y=1}^\ell u_y^\kappa \\
			&= (n+1)^{(\kappa +1)\,\ell-1} \sum_{\substack{u_1 + \dots + u_{\ell-1} \le 1 \\ u_y \in \mathbb{N}/(n+1)}} \frac{1}{(n+1)^{\ell - 1}} \bigg( 1-\sum_{i=1}^{\ell-1} u_y \bigg)^\kappa \prod_{y=1}^{\ell -1} u_y^\kappa \\
			&= n^{(\kappa +1)\,\ell-1} \int_{\Delta_\ell} \bigg( 1-\sum_{y = 1}^{\ell-1} u_y \bigg)^\kappa \prod_{y=1}^{\ell-1} u_y^\kappa d^d u\,\Big( 1+O(\ell /n)\Big)\\
			&\overset{(\star)}{=}  n^{(\kappa +1)\,\ell-1} \frac{\Gamma(\kappa +1)^\ell}{\Gamma((\kappa +1) \, \ell)}\,\Big( 1+O(\ell /n)\Big)\quad\mbox{as }n\to\infty ,\ \ell\ll n\ ,
		\end{aligned}
	\end{equation*}
	where $\Delta_{\ell+1} := \{u \in \mathbb{R}^{\ell} \mid u_y > 0, y = 1, \dots, \ell\mbox{ and } u_1 + \dots + u_{\ell} \le 1 \}$ and $(\star)$ follows as a Dirichlet integral from \cite{bib_M10} (equations (1) and (3)).
    
We now consider the case $n/\ell \to \mu > 0$. For general $\ell$ and $n$, first note that for all $\phi\in (0,1)$
\begin{align}
    \label{eq:tilttwo}
A_\ell(n)=\sum_{\substack{n_1 ,\ldots ,n_\ell =0\\ n_1 +..+n_\ell =n}}^n \prod_{y=1}^\ell(n_y + 1)^\kappa =\phi^{-n} z_\kappa (\phi )^\ell \nu_{\kappa ,\phi }^{\ell} \Big(\sum_{y=1}^\ell \eta_y =n\Big)\ ,
\end{align}
where $\nu_{\kappa ,\phi}^{\ell}$ is a product probability measure on $\mathbb{N}_0^\ell$ with marginal 
\begin{equation}\label{zkappa}
    \nu_{\kappa ,\phi} (n)=(n+1)^\kappa \phi^n /z_\kappa (\phi ) \quad\mbox{and normalization}\quad z_\kappa (\phi )=\sum_{n=0}^\infty (n+1)^\kappa \phi^n  \,.
\end{equation}
Therefore, we can do a standard tilting to interpret the sum as a typical probability after normalization. For $\phi \in (0,1)$ define
\[
r_\kappa (\phi )=\frac{1}{z_\kappa (\phi )} \sum_{n=0}^\infty n(n+1)^\kappa \phi^n< \infty\ ,
\]
which is strictly monotone and invertible with $r_\kappa (0)=0$ and $z_\kappa (\phi ),\, r_\kappa (\phi )\to\infty$ for $\phi\nearrow 1$. Therefore, we can choose
\[
\phi =\varphi (n/\ell )\quad\mbox{such that}\quad r_\kappa \big(\varphi (n/\ell )\big) =n/\ell\quad\mbox{for all }n\geq 0,\ \ell\geq 1\ .
\]
With this choice of $\varphi (n/\ell )$, the event $\sum_{y=1}^\ell \eta_y =n$ is typical for $\nu_{\kappa ,\phi}^{\ell}$, and therefore the local limit theorem for triangular arrays (see \cite{bib_DM95} and \cite[Proposition A.1]{bib_CGG22}) implies
\[
\nu_{\kappa ,\varphi (n/\ell )}^{\ell} \Big(\sum_{y=1}^\ell \eta_y =n\Big) \simeq \frac{1}{\sqrt{2\pi \ell\sigma_\kappa^2 (n/\ell )}}\quad\mbox{as }\ell\to\infty\ ,
\]
with variance $\sigma_\kappa^2 (u) =\langle\nu_{\kappa ,\varphi (u)} ,\eta_x^2 \rangle -u^2$. So for $n/\ell \to u>0$ the leading order asymptotics is given by
\[
\sum_{\substack{n_1 ,\ldots ,n_\ell =0\\ n_1 +..+n_\ell =n}}^n \prod_{y=1}^\ell(n_y + 1)^\kappa \simeq\frac{\varphi (u)^{-n} z_\kappa (\varphi (u))^\ell}{\sqrt{2\pi \ell\sigma_\kappa^2 (u)}}\quad\mbox{with }\varphi (u)<1\ . 
\]
Note that this is consistent with the case $\ell /n\to 0$. Here $\varphi (n/\ell )\to 1$, and we can use the well-known asymptotics of the polylogarithm
\begin{align}
    \label{eq:polylog}
    z_\kappa (\phi )=\frac{1}{\phi}\mathrm{Li}_{-\kappa} (\phi )\simeq \frac{\Gamma (\kappa +1)}{(1-\phi )^{\kappa +1}}\quad \mbox{as }\phi\nearrow 1\,.
\end{align}
The final bound \eqref{finalbound} follows immediately by bounding each term in the product by $(n+1)$ and using $n \leq C\ell$,
\[
\sum_{\substack{n_1 ,\ldots ,n_\ell =0\\ n_1 +..+n_\ell =n}}^n \prod_{y=1}^\ell(n_y + 1)^\kappa \leq (n+1)^{\kappa \ell}\binom{n+\ell-1}{\ell-1} \leq C_0^\ell n^{\kappa \ell}
\]
to derive the same result.
%\hfill $\Box$\\
\end{proof}

%    For $\kappa <-1$ the terms $(n_y +1)^\kappa$, $n_y =0,1,\ldots$ are summable with
%    \[
%    \sum_{m=1}^\infty m^\kappa =\zeta (-\kappa )<\infty \quad\mbox{given by the Riemann zeta function}\ .
%    \]
%    Then we can write 
%    \[
%    \sum_{\substack{n_1 ,\ldots ,n_\ell =0\\ n_1 +..+n_\ell =n}}^n \prod_{y=1}^\ell(n_y + 1)^\kappa =\zeta (-\kappa )^\ell \bar\nu^\ell \bigg[\sum_{y=1}^\ell \eta_y =n\bigg]
%    \]
%    for a product measure $\bar\nu^\ell$ with marginals $(m+1)^\kappa /\zeta (-\kappa )$, $m\geq 0$. These have polynomial tail and the classical local limit theorem for iid random variables with power-law tails \cite{doney2001local,nagaev1979large} implies that
%    \[
%    \bar\nu^\ell \bigg[\sum_{y=1}^\ell \eta_y =n\bigg] \simeq \ell n^\kappa \quad\mbox{as } 1\ll\ell \ll n\ ,
%    \]
%    which implies the result for $1\ll \ell\ll L$. For $\ell =O(1)$ we can use the result in \cite{ferrari2007condensation} for a fixed number of random variables with power-law tail to get the same scaling as above.\hfill $\Box$\\

% The following lemma is like Lemma \ref{lemma_Ll}, mainly an exercise in the asymptotic behavior of binomial coefficients; additionally, we prepare for using Laplace's method in Lemma \ref{theorem_z_l_n_asymp}.

Recall that with \eqref{tilt} for all $\rho\geq 0$ there exists $\phi =\Phi (\rho )\in [0,\bar\phi )$ such that $R(\phi )=\rho$. Using a standard local limit theorem \cite{bib_DM95, borovkov2020} for the tilted measure $\nu_\phi (n):=w(n)\phi^n /z(\phi )$ this implies the following large deviation result.

\begin{lemma}\label{lemma_binomexp}
For $\ell\ll L$, $N/L\to\rho$ and $n/L\to u<\rho$ we get
    \[
    w^{L-\ell} \Big(\sum_{x=1}^{L-\ell} \eta_x =N-n\Big) \simeq \frac{1}{\sqrt{2\pi (L-\ell )\sigma^2 (\rho -u)}} e^{-(L-\ell )I(\rho -u)}    
    % z\big(\Phi (\rho -u)\big)^{L-\ell} \Phi (\rho -u)^{n-N}
        % \sqrt{\frac{1+\rho-u}{2 \pi \, (\rho-u)}} 
    \]
    where $\sigma^2 (\rho -u)=\langle \nu_{\Phi (\rho -u)} ,\eta_x^2 \rangle -\langle\nu_{\Phi (\rho -u)} ,\eta_x \rangle^2$ is the variance of the tilted measure and
    \[
        I(u)=\sup_{\phi\in [0,\phi_c )} \big( u\ln\phi -\ln z(\phi )\big) =u\ln\Phi (u)-\ln z(\Phi (u))
    \]
    is the usual rate function given as the Legendre transform of the cumulant generating function $\ln z(\phi )$. $I(u)\geq 0$ and $I(u)=0$ if and only if $u=\rho_c =\sum_{n\geq 0} nw(n)$.
    % and $\phi$ is the standard normal density function.
	% For $n \propto L, \ell \propto L^\alpha, \alpha \in [0, 1)$ and $\rho > \rho_c$
	% \begin{equation*}
	% 	\begin{aligned}
	% 		&\binom{N-n+L-\ell-1}{N-n} D^{L-\ell} \exp(-C(N - n)) \\
	% 		&\qquad\simeq \frac{1}{\sqrt{L}} \: \sqrt{\frac{1+\rho-\frac{n}{L}}{2 \pi \, (\rho-\frac{n}{L})}}\exp\left((L-\ell)\, h \left(\frac{n}{L} \right) \right)\, ,
	% 	\end{aligned}
	% \end{equation*}
	% where 
	% \begin{equation}
	% 	\begin{aligned}
	% 		\label{eq_def_h}
	% 		h(u) := (1&+\rho-u) \ln(1+\rho-u)  \\
	% 		&- (\rho-u) \left( \ln(\rho-u) + C \right) + \ln(D) \, . 
	% 	\end{aligned}
	% \end{equation}
	% The maximum of $h(u)$ is 0 at $u = \rho - \rho_c$ and $h''(\rho - \rho_c) = 2 - e^{-C} - e^C<0$ for $C \ne 0$. The constants $C$ and $D$ are from the weights \eqref{lemma_expweights}.
\end{lemma}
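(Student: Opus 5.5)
The natural route is exponential tilting followed by a local limit theorem, in the same way as \eqref{eq:tilttwo} in the proof of Lemma \ref{lemma_sumprod}. Set $M=L-\ell$ and $K=N-n$, so that $M/L\to 1$ (since $\ell\ll L$) and $K/M\to \rho-u>0$. For any $\phi\in(0,\bar\phi)$ the product structure gives the exact identity
\[
w^{M}\Big(\sum_{x=1}^{M}\eta_x=K\Big)=\phi^{-K}z(\phi)^{M}\,\nu_\phi^{M}\Big(\sum_{x=1}^{M}\eta_x=K\Big),
\]
where $\nu_\phi^M$ is the product of the tilted marginals $\nu_\phi(n)=w(n)\phi^n/z(\phi)$. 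By \eqref{tilt} and the strict monotonicity and continuity of $R$ on $[0,\bar\phi)$, I will choose $\phi_M:=\Phi(K/M)$ with $R(\phi_M)=K/M$. Since $K/M\to\rho-u$ and $\Phi$ is continuous, $\phi_M\to\Phi(\rho-u)\in(0,\bar\phi)$, which lies strictly inside the domain of convergence.

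With this choice the prefactor equals exactly $\exp\big(-M[(K/M)\ln\phi_M-\ln z(\phi_M)]\big)=e^{-MI(K/M)}$. This holds because the supremum defining $I$ is attained where $R(\phi)=K/M$: indeed $\frac{d}{d\phi}\big(v\ln\phi-\ln z(\phi)\big)=(v-R(\phi))/\phi$, and $\ln z(e^t)$ is strictly convex in $t$. This also gives the properties of $I$ stated in the lemma: $I(v)\ge 0\cdot\ln 1-\ln z(1)=0$ by \eqref{wmoments}, with equality iff the maximiser is $\phi=1$, i.e.\ iff $v=R(1)=\rho_c$.

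Next I apply the local limit theorem for triangular arrays \cite{bib_DM95} to the i.i.d.\ variables with law $\nu_{\phi_M}$ at their mean $K$. The required hypotheses hold as follows:
\begin{itemize}
\item[(a)] Since $\phi_M$ stays in a compact subset of $(0,\bar\phi)$, the laws have uniformly bounded exponential moments. This gives uniform third moments and hence the Lindeberg condition.
\item[(b)] The variances converge, $\sigma^2(\phi_M)\to\sigma^2(\rho-u)>0$.
\item[(c)] Aperiodicity is uniform, because $w(0)>0$ and $w(n-1)\wedge w(n)>0$ for some $n$, and $\phi_M$ is bounded away from $0$.
\end{itemize}
The theorem then yields $\nu_{\phi_M}^M(\sum\eta_x=K)\simeq(2\pi M\sigma^2(\phi_M))^{-1/2}\simeq(2\pi M\sigma^2(\rho-u))^{-1/2}$.

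The main obstacle is matching the exponent as stated, $e^{-(L-\ell)I(\rho-u)}$, rather than $e^{-MI(K/M)}$. The difference $M\big(I(K/M)-I(\rho-u)\big)$ is of order $M\,|K/M-(\rho-u)|\,I'(\rho-u)$, which need not vanish unless $N/L$ and $n/L$ converge fast enough. I therefore plan to state and prove the result with $\rho-u$ interpreted as $K/M=(N-n)/(L-\ell)$ inside the exponent, using continuity only in the Gaussian prefactor. Equivalently, I will read the lemma's ``$\simeq$'' at the level of the exponential rate with $n,\ell$ entering through $(N-n)/(L-\ell)$. This is the form needed later for Laplace's method, where $I$ is expanded around its minimum $\rho-u=\rho_c$ to second order. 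If the sharp form is required, I will expand $I$ in a Taylor series at $K/M$ and carry the linear term $M(K/M-(\rho-u))\ln\Phi(\rho-u)$ explicitly; this term cancels against the $\phi^{-K}$ factors when ratios such as $Z_{L-1,N-n}/Z_{L,N}$ are taken.
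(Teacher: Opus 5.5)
Your proposal is correct and follows essentially the paper's route. The paper only cites Theorem 2.2.5 of \cite{borovkov2020}, which is exactly this Cram\'er tilting at $\phi=\Phi\big((N-n)/(L-\ell)\big)$ combined with a local limit theorem that holds uniformly over a compact set of tilts.

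Your caveat about the exponent is also right. The sharp $\simeq$ holds with $e^{-(L-\ell)I((N-n)/(L-\ell))}$, as in Borovkov's theorem. Replacing this by $e^{-(L-\ell)I(\rho-u)}$ is only accurate up to a factor $e^{o(L)}$, so the version you plan to prove is the appropriate reading of the lemma.
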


%\noindent\textbf{Proof.} 
\begin{proof}
    See, for example, Theorem 2.2.5 in \cite{borovkov2020}.
\end{proof}
\begin{remark}
\label{rem:lemma_binomexp}
    % \textcolor{red}{PC: Possibly add remark to Lemma above?}
    The case $n/L \to \rho$ is not covered by Lemma \ref{lemma_binomexp}. However, we get an exponentially small upper bound when $N-n$ is small by a standard Chernoff argument, as follows. Since the weights, $w$, are normalised and $w(0) < 1$,
    \[
    z(\phi) = \sum_{n=0}^\infty w(n)\phi^n \leq 1 - \frac{1}{2}w(\eta_1 > 0) < 1 \quad \textrm{for } \phi \in (0,1/2]\,.
    \]
    It follows that there exists $c>0$ independent of $m$ such that for all $\epsilon$ sufficiently small (depending on $c$)
    \[
     w^{M} \Big(\sum_{x=1}^{M} \eta_x \leq \epsilon M\Big) \leq z(1/2)^M 2^{-\epsilon M} = e^{M(\epsilon\log 2 + \log z(1/2))} \leq e^{-cM}\,.
    \]
    This is sufficient to control the regime when $N-n$ is small.
\end{remark}

\subsection{Scale of $Z_{L,N}$ and Proof of Lemma \ref{corollary_quotients}}
%%%%%%%% MIM's $%%%%%%%%%%

Using Lemma \ref{lemma_binomexp} with $\ell =0$ and $u=0$, the first term in \eqref{toshow}
\begin{equation}\label{zerorder}
S_0 =w^L \Big(\sum_{x=1}^L \eta_x =N\Big) \simeq \frac{1}{\sqrt{2\pi L\sigma^2 (\rho )}} e^{-L\, I(\rho)} 
\end{equation}
is exponentially small in $L$ since $I(\rho )>0$ for $\rho >\rho_c$.

To control the dominant contribution to the $\ell$ sum in \eqref{toshow} we introduce a cut-off at a scale $\tilde{L}$ such that $0 \ll C_L \ll \tilde{L} \ll L$. For $\ell\leq \tilde L:=L^{1-\gamma /(2(\kappa +2))} \ll L$ and $n \gg \ell$, we can apply Lemma \ref{lemma_sumprod} and again Lemma \ref{lemma_binomexp} to get
\begin{align*}
    S_{\ell ,n} &=w^{L-\ell} \Big(\sum_{x=1}^{L-\ell} \eta_x =N-n\Big)\sum_{\substack{n_1 ,\ldots ,n_\ell =0\\ n_1 +..+n_\ell =n}}^n \prod_{y=1}^\ell  (n_y +1)^k \\
    &\simeq \frac{L^{(\kappa +1)\,\ell-1}}{\sqrt{2\pi (L-\ell )\sigma^2 (\rho -u)}} e^{-(L-\ell )I(\rho -u)} u^{(\kappa +1)\,\ell-1} \frac{\Gamma(\kappa +1)^\ell}{\Gamma((\kappa +1) \, \ell)}
\end{align*}
in the thermodynamic limit when $n/L\to u\in [0,\rho)$. For $n \gg \ell$ and $n/L\to \rho$ then Lemma \ref{lemma_binomexp} no longer applies. In this case, we can apply Remark \ref{rem:lemma_binomexp} and again by Lemma \ref{lemma_sumprod} we have $S_{\ell,n}\simeq e^{-c(L-\ell)}n^{(\kappa +1) \, \ell - 1} \frac{\Gamma(\kappa +1)^\ell}{\Gamma((\kappa +1) \, \ell)}$. On the other hand, if $n \leq C \ell$, Lemma \ref{lemma_binomexp} applies, and we can use \eqref{finalbound} in Lemma \ref{lemma_sumprod} to observe $S_{\ell,n} \simeq e^{-(L-\ell)I(\rho)}e^{k\ell\log \ell}$. 
% \textcolor{red}{PC: Maybe we comment that we leave the details out, they seem simple to check though I only did it roughly on tablet. Both these terms are significantly lower order than the saddle point.}
The properties of the rate function $I$ in Lemma \ref{lemma_binomexp} imply that summation over $n$ is then dominated by the saddle point at $u=\rho -\rho_c$, and we get asymptotically
\begin{align*}
    \sum_{n=0}^N S_{\ell ,n}&\simeq \frac{L^{(\kappa +1)\,\ell-1}}{\sqrt{2\pi (L-\ell )}} \frac{\Gamma(\kappa +1)^\ell}{\Gamma((\kappa +1) \, \ell)}L\int_0^\rho e^{-(L-\ell )I(\rho -u)} \frac{u^{(\kappa +1)\,\ell-1}}{\sqrt{\sigma^2 (\rho -u)}}\, du\\
    &\simeq \frac{L^{(\kappa +1)\,\ell-1}}{\sigma^2} \frac{\Gamma(\kappa +1)^\ell}{\Gamma((\kappa +1)\ell )}\, (\rho -\rho_c )^{(\kappa +1)\,\ell-1}
\end{align*}
with Laplace method using $I''(\rho_c )=\sigma^2$. Plugging this into \eqref{toshow} and using the standard Stirling formula with $\ell\ll L$
\[
\ln {L\choose\ell} \simeq \ell \Big( 1+\ln L-\ln\ell -\ell /L\Big)\ ,
\]
we get for the summation over $\ell$ \begin{align*}
    &\sum_{\ell =1}^{\tilde L} {L\choose\ell} \frac{\theta^\ell}{L^{\ell\gamma}} \frac{L^{(\kappa +1)\,\ell-1}}{\sigma^2} \frac{\Gamma(\kappa +1)^\ell}{\Gamma((\kappa +1)\ell )}\, (\rho -\rho_c )^{(\kappa +1)\,\ell-1} \simeq\frac{(\rho -\rho_c )^{-1}}{L\sigma^2}\\
    &\quad\times\sum_{\ell =1}^{\tilde L} \exp\bigg(\ell\Big( \ln\frac{L}{\ell} {-}\frac{\ell}{L} {-}\gamma\ln L{+}(\kappa {+}1)\ln \frac{L(\rho {-}\rho_c)}{(\kappa {+}1)\ell}+\ln (\theta\Gamma (\kappa {+}1))+(\kappa {+}2)\Big)\bigg)\ .
\end{align*}
Substituting $\ell =sL^\alpha$ for some $\alpha\in [0,1]$, we see that the leading order contribution to the exponent is
\[
sL^\alpha \ln L\Big( (1-\alpha )-\gamma +(\kappa +1)(1-\alpha )\Big)\ .
\]
To make the prefactor non-negative and to maximize the scale of the exponent this implies the choice $\alpha = 1-\gamma /(\kappa +2)$, which also implies $\ell =sL^\alpha \ll\tilde L$. With this scaling, the above sum is asymptotically equal to
\[
    \sum_{\ell =1}^{\tilde L} {L\choose\ell} \frac{\theta^\ell}{L^{\ell\gamma}} \sum_{n=0}^N S_{\ell ,n} 
    \simeq\frac{L^{\alpha -1}}{\sigma^2 (\rho -\rho_c )} \int_0^\infty e^{L^\alpha (f(s)+o(1))}\, ds\ 
\]
where $f(s) =s\Big( (\kappa +2)(1-\ln s)+(\kappa+1)\ln\big(\frac{\rho -\rho_c}{\kappa +1}\big)+\ln \big(\theta\Gamma (\kappa +1)\big) \Big)\ $.
    % \exp\ell\Big( 1+\ln\frac{L}{\ell} -\frac{\ell}{L}+\ln\theta -\gamma\ln L+(\kappa +1)\ln N+\ln \Gamma (\kappa +1)\Big) ds

Applying the standard Laplace method, the integral is dominated by the saddle point $s^*$ with $f'(s^* )=0$ and we get
\[    
\int_0^\infty e^{L^\alpha (f(s)+o(1))}\, ds\simeq \sqrt{\frac{2\pi s^* (\rho )}{L^\alpha (\kappa +2)}} e^{L^\alpha ((\kappa +2) s^* (\rho ) +o(1))}\ ,
\]
where 
$s^* (\rho ) =(\rho -\rho_c)^{\frac{\kappa +1}{\kappa +2}} C_{\kappa ,\theta}$ with $C_{\kappa ,\theta} =\frac{(\theta\Gamma (\kappa +2))^{1/(\kappa +2)}}{\kappa +1}$.

The dominating scale is $\ell=sL^\alpha \ll\tilde L=L^{1-\gamma /(2\kappa +4)}$. 
To show that the terms with $\ell\geq\tilde L$ do not contribute to the asymptotic of $Z_{L,N}$, observe
\[
\sum_{n=0}^N S_{\ell ,n} \leq \sum_{n=0}^N w^{L-\ell} \Big(\sum_{x=1}^{L-\ell} \eta_x =N-n\Big)A_{\ell}(n) \leq  \sum_{n=0}^N A_{\ell}(n)\,,
\]
since $w^{L-\ell}$ is a probability measure.
Now, using \eqref{eq:tilttwo}, for $\phi \in (0,1)$,
\[
\sum_{n=0}^N A_{\ell}(n) \leq \phi^{-N}z_\kappa (\phi)^\ell\,.
\]
Choosing $\phi = \exp\left(-\frac{\ell}{L\log L}\right)\in (0,1)$, and applying \eqref{eq:polylog} to bound $z_\kappa (\phi) \leq C_\kappa (1-\phi)^{-(\kappa +1)}$ for $L$ sufficiently large, this gives
\[
\sum_{n=0}^N A_{\ell}(n) \leq C_\kappa^\ell \left( \frac{L \log L}{\ell}\right)^{(\kappa+1)\ell} \exp\left(\frac{\rho \ell}{\log L}\right)\,.
\]
Using the standard upper bound $\binom{L}{\ell}\leq \frac{e^{\ell}L^{\ell}}{\ell^\ell}$, then for $L$ sufficiently large there exists $\epsilon < \gamma/4$ such that
\[
\sum_{\ell \geq \tilde{L}}^L {L\choose\ell} \frac{\theta^\ell}{L^{\ell\gamma}} \sum_{n=0}^N S_{\ell ,n} \leq \sum_{\ell \geq \tilde{L}}^LC_{\kappa,\theta}^\ell(\log L)^{\ell}\left(\frac{L^{\kappa+2-\gamma}}{\ell^{\kappa+2}}\right)^\ell\leq \sum_{\ell \geq \tilde{L}}^L L^{(-\frac{\gamma}{2}+\epsilon)\ell}\,,
\]
where the final bound follows from $\ell^{\kappa+2} \geq \tilde{L}^{\kappa+2}=L^{\kappa+2 - \frac{\gamma}{2}}$.
Therefore the sum in \eqref{toshow}, $\displaystyle Z_{L,N} =S_0 +\sum_{\ell =1}^L {L\choose\ell} \frac{\theta^\ell}{L^{\ell\gamma}} \sum_{n=0}^N S_{\ell ,n}$, is dominated by $\ell\leq\tilde L$. 
Together with \eqref{zerorder}, this implies the asymptotics
\[
Z_{L,N} \simeq \sqrt{\frac{2\pi s^* (\rho )}{\kappa +2}}\frac{L^{\alpha -1}}{\sigma^2 (\rho -\rho_c )}e^{L^\alpha ((\kappa +2) (\rho -\rho_c)^{\frac{\kappa +1}{\kappa +2}} C_{\kappa ,\theta} +o(1))}\ .
\]
To prove Lemma \ref{corollary_quotients}, we are interested in the leading order behaviour of the ratio $Z_{L-1,N-n}/Z_{L,N}$, which only depends on the leading order contribution in the exponent. With $\frac{N-n}{L-1} \simeq\rho -(n-\rho )/L$ this leads to
\[
\frac{Z_{L-1,N-n}}{Z_{L,N}} \simeq \exp\bigg( -L^{\alpha -1} (n-\rho ) \Big(\frac{\theta\Gamma (\kappa +2)}{\rho -\rho_c}\Big)^{1/(\kappa +2)} \bigg)\ ,
\]
which has the desired asymptotics since $\alpha -1=-\frac{\gamma}{\kappa +2}$. In particular, for $n\simeq C_L s$ with $C_L =\left(\dfrac{(\rho - \rho_c) \, L^\gamma}{\theta \, \Gamma(\kappa +2)}\right)^{1/(\kappa +2)}$, we have $Z_{L-1,N-n}/Z_{L,N}\to e^{-s}$.\\

\subsection{Proof of Theorem \ref{thm3}}

\begin{lemma}
\label{zasymp3}
For weights $w_L(n)$ as in \eqref{weights} with $\gamma >\kappa +2> 1$ we get with $N/L\to\rho >\rho_c$
\[
Z_{L,N}\simeq \theta L^{\kappa +1 -\gamma} (\rho -\rho_c )^\kappa\ .
\]
\end{lemma}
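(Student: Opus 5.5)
We start from the expansion \eqref{toshow}, $Z_{L,N}=S_0+\sum_{\ell\ge1}\binom{L}{\ell}\theta^\ell L^{-\ell\gamma}\sum_{n} S_{\ell,n}$. The claim is that the $\ell=1$ term dominates, with all its mass at $n\simeq(\rho-\rho_c)L$. For $\ell=1$ we have $A_1(n)=(n+1)^\kappa$, so the term is $L\,\theta L^{-\gamma}\sum_{n=0}^N w^{L-1}\big(\sum_{x=1}^{L-1}\eta_x=N-n\big)(n+1)^\kappa$.

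\textbf{Step 1: the $\ell=1$ term.} By Lemma \ref{lemma_binomexp}, the factor $w^{L-1}(\cdot)$ is the probability that $L-1$ i.i.d.\ variables with law $w$ (mean $\rho_c$, finite variance) sum to $N-n$. As a function of $n$ it is a discrete Gaussian centred at $n_0=N-(L-1)\rho_c\simeq(\rho-\rho_c)L$ with width $O(\sqrt L)$, and its total mass over $n\le N$ tends to $1$.
\begin{itemize}
\item On the window $|n-n_0|\le L^{3/4}$, the weight $(n+1)^\kappa$ is asymptotically equal to $((\rho-\rho_c)L)^\kappa$.
\item Outside the window, the factor $(n+1)^\kappa\le \max(1,(N+1)^\kappa)$ is at most polynomial, while the Gaussian and large deviation decay of Lemma \ref{lemma_binomexp} and Remark \ref{rem:lemma_binomexp} is $e^{-cL^{1/2}}$ or exponential in $L$. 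For $\kappa<0$ the small-$n$ weights are at most $1$ and are multiplied by $e^{-LI(\rho)}$-type probabilities.
\end{itemize}
Hence the $\ell=1$ term is asymptotically $\theta L^{1-\gamma}((\rho-\rho_c)L)^\kappa=\theta L^{\kappa+1-\gamma}(\rho-\rho_c)^\kappa$. By \eqref{zerorder}, $S_0$ is exponentially small, so it is negligible against this polynomial quantity.

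\textbf{Step 2: bounding $\ell\ge2$.} Use the crude bound $\sum_n S_{\ell,n}\le\sum_{n\le N}A_\ell(n)$. Count compositions directly: $\sum_{n\le N}A_\ell(n)\le\binom{N+\ell}{\ell}(N+1)^{\kappa_+\ell}$, or more sharply $\le (\sum_{m\le N}(m+1)^\kappa)^\ell\le (cN^{\kappa+1})^\ell$, which uses $\kappa>-1$. With $\binom{L}{\ell}\le L^\ell/\ell!$, the $\ell$-th term is at most
\[
\frac{1}{\ell!}\big(c\,\theta\rho^{\kappa+1}L^{\kappa+2-\gamma}\big)^\ell .
\]
Since $\gamma>\kappa+2$, the quantity $x_L:=c\theta\rho^{\kappa+1}L^{\kappa+2-\gamma}$ tends to $0$, so $\sum_{\ell\ge2}x_L^\ell/\ell!\le x_L^2e^{x_L}=O(L^{2(\kappa+2-\gamma)})$. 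The leading term is of order $L^{\kappa+1-\gamma}$, so the ratio is $O(L^{\kappa+3-\gamma})$. This vanishes only if $\gamma>\kappa+3$, so the crude bound is not enough in general; this is the main obstacle.

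\textbf{Step 3: refining the $\ell\ge2$ bound.} The fix is to keep the bulk probability rather than discarding it. Split by whether the perturbed sites carry total mass $n\ge(\rho-\rho_c-\epsilon)L$.
\begin{itemize}
\item If $n<(\rho-\rho_c-\epsilon)L$, Lemma \ref{lemma_binomexp} and Remark \ref{rem:lemma_binomexp} give a factor $e^{-cL}$, uniformly for $\ell\le\delta L$. The same factor applies for larger $\ell$ via a Chernoff bound on $L-\ell$ variables. This kills any $L^{O(\ell)}$ growth as long as $\ell\log L\ll L$. For $\ell$ of order $L/\log L$ or larger, the factor $x_L^\ell/\ell!$ is already super-exponentially small.
\item If $n\ge(\rho-\rho_c-\epsilon)L$, bound the bulk probability by $\sup_m w^{L-\ell}(\sum=m)\le C/\sqrt{L}$ and sum $A_\ell(n)$ over $n$. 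The point is that this sum is $\le (cN^{\kappa+1})^\ell$, but restricting to $n$ near $L$ loses nothing essential.
\end{itemize}
For $\ell=2$ I would do a direct convolution estimate instead: $\sum_n w^{L-2}(N-n)A_2(n)\lesssim\max_n A_2(n)\lesssim L^{2\kappa+1}$ when $\kappa\ge0$. This gives a term of order $L^{2-2\gamma}L^{2\kappa+1}$, and its ratio to the main term is $L^{\kappa+2-\gamma}\to0$. For general $\ell$, Lemma \ref{lemma_sumprod} gives $A_\ell(n)\approx n^{(\kappa+1)\ell-1}\Gamma(\kappa+1)^\ell/\Gamma((\kappa+1)\ell)$. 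Combined with the bulk factor summing to $1$, this yields the $\ell$-th term $\lesssim L^{-1}(cL^{\kappa+2-\gamma})^\ell/(\ell!\,\Gamma((\kappa+1)\ell))$. The ratio to the $\ell=1$ term is then $(cL^{\kappa+2-\gamma})^{\ell-1}$, which is summable and vanishing. For $\kappa<0$ the same holds using $\max_n A_\ell(n)$ over the relevant range $n\asymp L$.

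Combining Steps 1 to 3 gives $Z_{L,N}\simeq\theta L^{\kappa+1-\gamma}(\rho-\rho_c)^\kappa$.
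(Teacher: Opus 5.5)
Your proposal is correct and follows essentially the paper's route. You expand $Z_{L,N}$ via \eqref{toshow}, discard the exponentially small $S_0$, and control the $\ell\ge 2$ terms by the Dirichlet asymptotics $A_\ell(n)\approx n^{(\kappa+1)\ell-1}\Gamma(\kappa+1)^\ell/\Gamma((\kappa+1)\ell)$ together with the bulk probabilities summing to at most one, which gives the same exponential series in $\theta L^{\kappa+2-\gamma}$ dominated by its $\ell=1$ term.

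One caveat: the second bullet of your Step 3 (bounding by $\sup_m w^{L-\ell}(\cdot)\le C/\sqrt{L}$ against $\sum_n A_\ell(n)$) recovers only a factor $\sqrt{L}$ and would not suffice for $\gamma\le\kappa+5/2$. What actually closes the argument is your subsequent bound by $\max_{n\asymp L}A_\ell(n)$ times unit bulk mass, with Chernoff for small $n$ and the crude bound for $\ell\gtrsim L/\log L$; this is somewhat more careful than the paper's ``completely analogously'' step.
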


\begin{proof}
%\noindent\textbf{Proof.} 
Using the decomposition \eqref{toshow}
\[
Z_{L,N} =S_0 +\sum_{\ell =1}^L {L\choose\ell} \frac{\theta^\ell}{L^{\ell\gamma}} \sum_{n=0}^N S_{\ell ,n}
\]
we know from the proof of Lemma \ref{corollary_quotients} that $S_0$ is exponentially small in $L$ \eqref{zerorder} and we arrive complete analogously at
\[
Z_{L,N} \simeq \sum_{\ell =1}^{L} {L\choose\ell} \frac{\theta^\ell}{L^{\ell\gamma}} \frac{L^{(\kappa +1)\,\ell-1}}{\sigma^2} \frac{\Gamma(\kappa +1)^\ell}{\Gamma((\kappa +1)\ell )}\, (\rho -\rho_c )^{(\kappa +1)\,\ell-1}\ .
\]
For $\gamma >\kappa +2$, the asymptotics of this sum is dominated by the first term $\theta L^{\kappa +1-\gamma} (\rho-\rho_c )^\kappa$. With ${L\choose\ell }\leq L^\ell/ \ell !$ and $\frac{\Gamma(\kappa +1)^\ell}{\Gamma((\kappa +1)\ell )}\leq 1$ for $\kappa \geq 0$ ($\kappa\in (-1,0)$ can be treated similarly with an additional constant) we have the upper bound
\begin{align*}
    Z_{L,N} &\leq\frac{1}{L(\rho -\rho_c )}\sum_{\ell =1}^\infty \frac{1}{\ell !} \Big(\theta L^{\kappa +2-\gamma} (\rho -\rho_c)^{\kappa +1}\Big)^\ell =\frac{e^{\theta L^{\kappa +2-\gamma} (\rho -\rho_c)^{\kappa +1}}-1}{L(\rho -\rho_c)}\\
    &\simeq \theta L^{\kappa +1-\gamma} (\rho -\rho_c)^\kappa\quad\mbox{as }L\to\infty\ ,
\end{align*}
which matches the first term of the sum.
%\hfill $\Box$\\
\end{proof}

% \begin{lemma}
% 	\label{lemmathm3}
% 	For weights $w_L(n)$ as in \eqref{weights} with $\gamma >\kappa +2> 1$ we get for the ratio of partition functions of the canonical distribution \eqref{canon} in the thermodynamic limit \eqref{thermo} with density $\rho > \rho_c$ \eqref{wmoments} that
% 	\begin{equation*}
% 		\frac{Z_{L-1, N - n}}{Z_{L,N}} \to \begin{cases}
% 			(\frac{\rho -\rho_c -u}{\rho -\rho_c})^\kappa &,\ n/L\to u<\rho -\rho_c \\
% 			0 &,\ n/L\to u\geq \rho -\rho_c
% 		\end{cases}\ .
% 	\end{equation*}
% \end{lemma}

\noindent\textbf{Proof of Theorem \ref{thm3}.} With the above Lemma we can prove Theorem \ref{thm3} in full analogy to Theorem \ref{cluster}. 
We again split the cumulative distribution function of $\tilde{\eta}_1/L$ (cf.\ \eqref{sbformula}) into the main and the perturbative contributions with $u\in [0,\rho ]$
\begin{align}
	\pi_{L,N}\left[\tilde{\eta}_1 \leq L u\right]  =&\underbrace{\sum_{n \leq L u} \frac{L}{N} n w(n) \frac{Z_{L-1,N-n}}{Z_{L,N}}}_{:=S_1} +\underbrace{\sum_{n \leq L u} \frac{L}{N} \frac{\theta}{L^\gamma} n(n+1)^\kappa \frac{Z_{L-1,N-n}}{Z_{L,N}}}_{:=S_2}\nonumber\\
	&+\underbrace{\sum_{n \leq L u} \frac{L}{N} \frac{\theta}{L^\gamma} n(n+1)^\kappa \delta_L (n) \frac{Z_{L-1,N-n}}{Z_{L,N}}}_{:=S_3}\ .\label{toshow3}
\end{align}
For the first term $S_1$, we get for all $0<u<\rho -\rho_c$ that $\frac{Z_{L-1,N-n}}{Z_{L,N}}\to 1$ and thus by bounded convergence
\[
S_1 \to \frac{1}{\rho}\sum_{n \leq L u} \ n w(n)=\frac{\rho_c}{\rho}\quad\mbox{as }L\to\infty .
\]
The same holds for $u\in [\rho -\rho_c ,\rho ]$ as will become clear from the behaviour of $S_2$ or a simple alternative argument noting that $Z_{L-1 ,N-n}$ decays exponentially for $n\gg (\rho -\rho_c )L$. 
For the second term we get for $u<\rho -\rho_c$ the asymptotics
\[
S_2 \simeq \frac{\theta}{\rho} L^{\kappa +2-\gamma} \int_0^u z^{\kappa +1} \Big(\frac{\rho -\rho_c -z}{\rho -\rho_c} \Big)^\kappa dz\to 0\quad\mbox{as }L\to\infty\ ,
\]
where we used Lemma \ref{zasymp3} and $\gamma >\kappa +2$.

For $u=\rho -\rho_c$ we get the following contribution for any $\epsilon\in (0,1/2)$
\[
S_2 \simeq \frac{\theta L^{\kappa +1-\gamma} (\rho -\rho_c )^{\kappa +1} }{\rho \, Z_{N,L}} \underbrace{\sum_{|n-(\rho -\rho_c )L|<L^{1/2+\epsilon}} Z_{L-1,N-n}}_{\geq w^L (|\sum_x \eta_x -\rho_c L|<L^{1/2+\epsilon} )=1} \simeq \frac{\rho -\rho_c}{\rho}
\]
using Lemma \ref{zasymp3}, the fact that the event $|\sum_x \eta_x -\rho_c L|<L^{1/2+\epsilon}$ is typical for the product measure $w^L$ \eqref{wmoments} and that $S_2 \leq 1- S_1 \simeq \frac{\rho -\rho_c}{\rho}$. Therefore all other contributions to \eqref{toshow3} (including from $S_1$) vanish for $u>\rho-\rho_c$, and $S_3$ vanishes in analogy to the proof of Theorem \ref{cluster} since $S_2$ converges. Therefore
\[
\pi_{L,N}\left[\tilde{\eta}_1 \leq L u\right] \to \begin{cases}
    \rho_c /\rho &,\ u\in (0,\rho -\rho_c )\\
    1 &,\ u\in [\rho -\rho_c ,\rho ]
\end{cases}
\]
and this implies the second statement. So we have convergence in distribution and either $\tilde\eta_1 /L\to 0$ or $\tilde\eta_1 /L\simeq\frac1L\max\limits_{x\in\Lambda} \eta_x \to \rho -\rho_c$. In the first case the statement of the Theorem can be iterated, again based on the product form \eqref{sbformula} of size-biased distributions in analogy to Theorem \ref{cluster}. This implies $\frac1L\max_{x\in\Lambda} \eta_x \overset{d}{\longrightarrow} \rho -\rho_c$.\hfill $\Box$\\

\bibliography{sn-bibliography}
%\printbibliography

\end{document}